\documentclass[10pt, reqno]{amsart}

\usepackage[T1]{fontenc}
\usepackage[english]{babel}
\usepackage[dvipsnames,table,xcdraw]{xcolor}
\definecolor{myred}{RGB}{228,26,28}
\definecolor{myblue}{RGB}{55,124,184}
\definecolor{mygreen}{RGB}{77,175,74}
\usepackage[colorlinks=true,urlcolor=myblue,linkcolor=myblue,runcolor=myred,citecolor=myred,linktoc=all]{hyperref}

\usepackage{geometry}
\geometry{top=2.5cm, bottom=2.5cm, left=2cm, right=2cm}
\usepackage{graphicx}
\usepackage{array}
\usepackage{wrapfig}
\usepackage{enumitem}
\setlist[itemize]{label={$\vcenter{\hbox{\tiny$\bullet$}}$}}
\usepackage{appendix}
\usepackage{caption}
\captionsetup{font=small, labelfont={color=myblue,sc}, labelsep=endash}
\usepackage{subcaption}

% maths
\usepackage{amsmath,mathtools,amssymb,amsthm,amsfonts}
\usepackage[foot]{amsaddr}
\usepackage{bm}
\usepackage{esint}
\usepackage{chemformula}

% my macros

\renewcommand{\lq}{\leqslant}
\newcommand{\gq}{\geqslant}
\newcommand{\set}[1]{\left\{ #1 \right\}}

\newcommand{\Rb}{\mathbb{R}}
\newcommand{\Cb}{\mathbb{C}}

\newcommand{\Forall}{\forall\ }
\newcommand{\ie}{\emph{i.e.}\ }
\newcommand{\Ran}{{\rm Ran} \, }
\newcommand{\Span}{{\rm Span} \, }
\renewcommand{\Re}{{\rm Re} \, }

\newcommand{\Tr}{{\rm Tr}}
\renewcommand{\d}{{\rm d}}
\newcommand{\prt}[1]{\left( #1 \right)}

\renewcommand{\i}{{\rm i}}
\renewcommand{\L}{{\rm L}}
\renewcommand{\H}{{\rm H}}

\newcommand{\herm}{{\rm herm}}

\newcommand{\normF}[1]{\left\lVert#1\right\rVert_{\rm F}}
\newcommand{\norm}[1]{\left\| #1 \right\|}
\newcommand{\abs}[1]{\left| #1 \right|}
\newcommand{\cro}[1]{\left\langle  #1 \right\rangle}
\newcommand{\croF}[1]{\left\langle  #1 \right\rangle_{\rm F}}
\newcommand{\bra}[1]{\left\langle #1 \right|}
\newcommand{\ket}[1]{\left| #1 \right\rangle}
\newcommand{\bak}[3]{\left\langle #1 \middle| #2 \middle | #3 \right\rangle}

% mathcal
\newcommand{\Hc}{\mathcal{H}}
\newcommand{\Nc}{\mathcal{N}}
\newcommand{\Sc}{\mathcal{S}}
\renewcommand{\Mc}{\mathcal{M}}
\newcommand{\Tc}{\mathcal{T}}
\newcommand{\Lc}{\mathcal{L}}
\newcommand{\Xc}{\mathcal{X}}
\newcommand{\Gc}{\mathcal{G}}
\newcommand{\Rc}{\mathcal{R}}

\newcommand{\Op}{{\Omega}}
\def\deltaP{X}
\def\deltaPhi{\Xi}
\def\deltaphi{\xi}

\newcommand{\Enl}{E_{\rm nl}}
\newcommand{\Ecut}{{E_{\rm cut}}}
\newcommand{\Ecutref}{{E_{\rm cut,ref}}}
\newcommand{\Nel}{{N_{\rm el}}}

% theorems
\theoremstyle{plain}
\newtheorem{proposition}{Proposition}

\theoremstyle{definition}
\newtheorem{remark}{Remark}

\addto\extrasenglish{%
}

%=============================================================================

\title{Practical error bounds for properties in plane-wave electronic
  structure calculations}
\author{Eric Canc\`es$^{1,2}$}
\author{Genevi\`eve Dusson$^{3}$}
\author{Gaspard Kemlin$^{1,2}$}
\author{Antoine Levitt$^{1,2}$}

\address[1]{CERMICS, \'Ecole des ponts}
\address[2]{Inria Paris}
\address[3]{Laboratoire de Math\'ematiques de Besan\c{c}on, UMR CNRS 6623,
  Universit\'e Bourgogne Franche-Comt\'e}
\email{eric.cances@enpc.fr, genevieve.dusson@math.cnrs.fr,
  gaspard.kemlin@enpc.fr, antoine.levitt@inria.fr}

\begin{document}

\begin{abstract}
  We propose accurate computable error bounds for quantities of
  interest in plane-wave electronic structure calculations, in particular
  ground-state density matrices and energies,
  and interatomic forces. These bounds are based on an
  estimation
  of the error in terms of the residual of the solved equations, which
  is then efficiently approximated with computable terms. After
  providing coarse bounds based on an analysis of the inverse Jacobian, we
  improve on these bounds by solving a linear problem in a small
  dimension that involves a Schur complement. We numerically show how
  accurate these bounds are on a few representative materials, namely
  silicon, gallium arsenide and titanium dioxide.
\end{abstract}

\maketitle

\section{Introduction}

This article focuses on providing practical error estimates for
numerical approximations of electronic structure calculations. Such computations are key in many domains, as they allow for the simulation of systems
at the atomic and molecular scale. They are particularly
useful in the fields of chemistry, materials science, condensed matter
physics, or molecular biology. Among the many electronic structure
models available, Kohn--Sham (KS) Density Functional Theory
(DFT)~\cite{Kohn1965} with semilocal
density functionals is one of the most used in practice, as it offers
a good compromise between accuracy and computational cost. We will
focus on this model in this article. Note that the mathematical
formulation of this problem is similar to that of the
Hartree--Fock~\cite{hartreeWaveMechanicsAtom1928} or Gross--Pitaevskii
equations~\cite{pitaevskiiBoseEinsteinCondensation2003}, so that what
we present in the context of DFT can be easily extended to such
contexts. We will focus on plane-wave discretizations within the
pseudopotential approximation, which are most
suited for the study of crystals; some (but not all) of our methodology can be applied in
other contexts as well, including the aforementioned Hartree--Fock or Gross--Pitaevskii models, as well as molecules simulated using plane-wave DFT.

In this field, the first and most crucial problem is the determination of the
electronic ground state of the system under consideration. Mathematically
speaking, this problem is a constrained minimization problem. Writing
the first-order optimality conditions of this problem leads to an
eigenvalue problem that is nonlinear in the eigenvectors. At the continuous level, the unknown is a
subspace of dimension $\Nel$, the number of electrons in the system;
this subspace can be conveniently described using either the orthogonal projector on it (density
matrix formalism) or an orthonormal basis of it (orbital formalism). This problem is
well-known in the literature and the interested reader is referred to
\cite{cancesConvergenceAnalysisDirect2021} and the references therein for
more information on how it is solved in practice.

Solving this problem numerically requires a number of
approximations, so that only an approximation of the exact
solution can be computed. Being able to estimate the error between the
exact and the approximate solutions is crucial, as this information can
be used to reduce the high computational cost of such methods by an
optimization of the approximation parameters, and maybe more
importantly, to add error bars to quantities of interest (QoI)
calculated from the approximate solution. In our context, such QoI are
typically the ground-state energy of the system and the forces on the atoms
in the system, but may also include any information computable from
the Kohn--Sham ground state.

While such error bounds have been developed already some time ago for
boundary value problems, e.g. in the context of finite element
discretization, using in particular {\it a posteriori} error
estimation~\cite{verfurthPosterioriErrorEstimation1994}, the
development of bounds in the context of electronic structure is
quite recent, and still incomplete. Computable and guaranteed error
bounds for linear eigenvalue problems have been proposed in the last
decade~\cite{cancesGuaranteedRobustPosteriori2017,cancesGuaranteedRobustPosteriori2018,cancesGuaranteedPosterioriBounds2020,carstensenGuaranteedLowerBounds2014,liuFrameworkVerifiedEigenvalue2015a};
we refer to~\cite[Chapter 10]{nakaoNumericalVerificationMethods2019a}
for a recent monograph on the subject. Specifically for electronic
structure calculations, some of us proposed guaranteed error bounds
for linear eigenvalue
equations~\cite{herbstPosterioriErrorEstimation2020a}. For fully
self-consistent (nonlinear) eigenvalue equations, an error bound was
proposed for a simplified 1D Gross--Pitaevskii equation
in~\cite{dussonPosterioriAnalysisNonlinear2017}; however the
computational cost of evaluating the error bound in this contribution
is quite high. So far, no error bound has been proposed for the error estimation of
general QoI in electronic structure calculations, in particular for
the interatomic forces. This is what we are trying to achieve in
this contribution.

In this article, we use a general approach based on a
linearization of the Kohn--Sham equations. It is instructive to compare our
approach to those used in a general context. Assume we want to find
$x \in \Rb^{n}$ such that $f(x)=0$, for some nonlinear function
$f : \Rb^{n} \to \Rb^{n}$ (the residual). Near a solution $x_{*}$, we have
$f(x) \approx f'(x) (x-x_{*})$, and therefore, if $f'(x)$ is
invertible, we have the error-residual relationship
\begin{align}
  \label{eq:linearization}
  x-x_{*} \approx f'(x)^{-1} f(x).
\end{align}
This is the same approximation that leads to the Newton algorithm.
Assume now that we want to compute a real-valued
QoI $A(x_*)$, where $A:\Rb^n \to \Rb$ is a $C^1$ function (e.g. the energy, a component of the interatomic forces, of the density, \dots); then  we have the approximate equality with computable right-hand side:
\begin{align}\label{eq:err_A}
  A(x)-A(x_{*}) \approx \nabla A(x) \cdot \prt{ f'(x)^{-1} f(x)}.
\end{align}
From here, we obtain the simple first estimate
\[
  |A(x) - A(x_*)| \lq  \abs{\nabla A(x)} \norm{f'(x)^{-1}}_{\rm op}
  \abs{f(x)},
\]
where $\abs{\cdot}$ is any chosen norm on $\Rb^n$, and
$\norm{\cdot}_{\rm op}$ is the induced operator norms on
$\Rb^{n \times n}$ (note that $\nabla A(x) \in \Rb^{n}$ and
$f'(x) \in \Rb^{n \times n}$). This approximate bound can be turned
into a rigorous one using information on the second derivatives of $f$; see for instance~\cite{schmidtRigorousEffectiveAposteriori2020}.

In extending this approach to Kohn--Sham models, we
encounter several difficulties:
\begin{itemize}
  \item The structure of our problem is not easily formulated as above
    because of the presence of constraints and degeneracies. We solve this using the
    geometrical framework of~\cite{cancesConvergenceAnalysisDirect2021}
    to identify the appropriate analog to the Jacobian $f'(x)$.
  \item The computation of the Jacobian and its inverse is prohibitively
    costly. We use iterative strategies to keep this cost manageable.
  \item Choosing the right norm is not obvious in this context. For
    problems involving partial differential equations, where $f$
    includes partial derivatives, it is natural to consider Sobolev-type
    norms, with the aim of making $f'$ a bounded operator between the
    relevant function spaces. We explore different choices and their
    impacts on the error bounds.
  \item The operator norm inequalities
    \[
      \left| \nabla A(x) \cdot \prt{ f'(x)^{-1} f(x)} \right| \lq
      \abs{\nabla A(x)}  \norm{f'(x)^{-1}}_{\rm op} \abs{f(x)}
    \]
    are very often largely suboptimal, even
    with appropriate norms. We quantify this on representative examples.
  \item The structure of the residual $f(x)$ plays an
    important role. For instance, when~$x$ results from a Galerkin
    approximation to a partial differential equation, $f(x)$ is
    orthogonal to the approximation space. In the context of plane-wave
    discretizations, this means the residual only contains
    high-frequency Fourier components. We
    demonstrate how this impacts the quality of the above
    bounds when $A$ represents the interatomic forces, in which case its derivative mostly acts on low-frequency components.
\end{itemize}

The main result of our work therefore lies in the derivation of an efficient, asymptotically accurate, way of approximating $\nabla A(x) \cdot \prt{ f'(x)^{-1} f(x)}$ using the specific structure of the residual $f(x)$ in a plane-wave discretization, where $A$ represents a component of the interatomic forces of the system. This approximation can then be used either to approach the actual error $A(x) - A(x_*)$ in \eqref{eq:err_A} or to improve $A(x)$ by computing $A(x) - \nabla A(x) \cdot \prt{ f'(x)^{-1} f(x)}$, which is a better approximation of $A(x_*)$. These estimates are a new step towards robust and guaranteed {\it a posteriori} error estimates for Kohn-Sham models: this paper reflects the process that lead to their derivation, by describing the issues raised when applying natural ideas and how we propose to solve these issues.

Throughout the paper, we will provide numerical tests to illustrate
our results. All these tests are performed with the DFTK software
\cite{herbstDFTKJulianApproach2021}, a recent Julia package solving
the Kohn--Sham equations in the pseudopotential approximation using a
plane-wave basis, thus particularly well suited for periodic systems
such as crystals~\cite{martinElectronicStructureBasic2004}. We are
mostly interested in three QoI: the ground-state energy, the ground-state density, and the
interatomic forces, the latter being computed using the
Hellmann--Feynman theorem. We will demonstrate the main points with
simulations on a simple system (bulk silicon), then present results
for more complicated systems.

We will be interested in this paper only in quantifying the
discretization error. However, the general framework we develop can be
used also to treat other types of error (such as the ones resulting
from the iterative solution of the underlying minimization problem). We only consider
insulating systems at zero temperature, and do not consider the error
due to finite Brillouin zone sampling
\cite{cancesNumericalQuadratureBrillouin2020}; extending the formalism
to finite temperature and quantifying the sampling error, particularly
for metals, is currently under investigation, see \cite{herbstRobustEfficientLine2022} for a first step in this direction.

The outline of this article is as follows. In \autoref{sec:sec2}, we
present the mathematical framework related to the solution of the
electronic structure minimization problem, describing in particular
objects related to the tangent space of the constraint manifold.
In \autoref{sec:kohn_sham}, we present the Kohn--Sham model and the numerical framework in which our tests will be performed.
In \autoref{sec:sec3}, we propose a first crude bound of the error
between the exact and the numerically computed solution based on a
linearization argument as well as an operator norm inequality, both
for the error on the ground-state density matrix and on the forces. In
\autoref{sec:sec4}, we refine this bound by splitting between low
and high frequencies, and using a Schur complement to refine the error bound on
the low frequencies. Finally, in \autoref{sec:experiments}, we provide
numerical simulations on more involved materials systems, namely on a
gallium arsenide system (\ch{GaAs}) and a titanium dioxide system
(\ch{TiO2}), showing that the proposed bounds work well in
those cases.

\section{Mathematical framework}
\label{sec:sec2}
In this section, we present the models targeted by our study, as well as the elementary tools of differential geometry used to derive and compute the error bounds on the QoI.

\subsection{General framework}

The work we present here is valid for a large class of mean-field
models including different instances of Kohn--Sham models, the
Hartree--Fock model, and the time-independent Gross--Pitaevskii model
and its various extensions. To study them in a unified way, we use a
mathematical framework similar to the one in
\cite{cancesConvergenceAnalysisDirect2021}. To keep the presentation
simple, we will work in finite dimension and consider that the
solutions of the problem can be very accurately approximated in a
given finite-dimensional space of (high) dimension $\Nc$, which we
identify with $\Cb^\Nc$. We denote by
\[\cro{x , y}\coloneqq \Re(x^*y)\] the $\ell^2$ inner product of
$\Cb^\Nc$, seen as a vector space over $\Rb$. We equip the $\Rb$-vector space of square Hermitian matrices
\[
  \Hc\coloneqq\Cb_{\herm}^{\Nc \times \Nc}
\]
with the Frobenius inner product $\croF{A , B}
\coloneqq\Re\prt{\Tr(A^*B)}$. Note that although it is important in
applications to allow for complex orbitals and density
matrices, the space of Hermitian matrices is not a vector space over
$\Cb$, and therefore we will always consider vector spaces over $\Rb$.

\medskip

The density-matrix formulation of the mean-field model in this reference approximation space reads
%\begin{equation}
%  \label{eq:pb}
%  \min\{E(P), \; P\in\Mc_{\Nc} \}
%\end{equation}
\begin{equation}
  \label{eq:pb}
  \min\{E(P), \; P\in\Mc_{\Nc} \}, \quad \mbox{where} \quad \Mc_{\Nc} \coloneqq \set{P\in\Hc \;\middle|\; P^2=P,\
    \Tr(P)=\Nel}
\end{equation}
%where
%\begin{equation}\label{eq:GrMan}
%  \Mc_{\Nc} \coloneqq \set{P\in\Hc \;\middle|\; P^2=P,\
%    \Tr(P)=\Nel},\quad
%\end{equation}
is the manifold of rank-$\Nel$ orthogonal projectors (density matrices), and $E : \Hc \to \Rb$ is a $C^2$ nonlinear energy functional. The parameter $\Nel$ (with $1 \lq \Nel \lq \Nc$) is a fixed integer depending on the physical model, and not on its discretization in a finite-dimensional space. For mean-field electronic structure models, $\Nel$ is the number of electrons or electron pairs in the system (hence the notation $\Nel$); in the standard Gross--Pitaevskii model, $\Nel=1$. The energy functional $E$ is of the form
\[
  E(P) \coloneqq \Tr(H_0P) + \Enl(P),
\]
where $H_0$ is the linear part of the mean-field Hamiltonian, and
$\Enl$ a nonlinear contribution depending on the considered model (see
\autoref{sec:kohn_sham} for the expressions in the
Kohn--Sham model). For simplicity of presentation we will ignore spin in the formalism, but we will include it
in the numerical simulations (see Remark \ref{rem:spin}). The
set $\Mc_{\Nc}$ is diffeomorphic to the Grassmann manifold
of $\Nel$-dimensional complex vector subspaces of~$\Cb^{\Nc}$.

Problem \eqref{eq:pb} always has a minimizer since it consists in
minimizing a continuous function on a compact set. This minimizer may or may not
be unique, depending on the model and/or the physical system
under study. We will not elaborate here on this uniqueness issue, and
assume for the sake of simplicity that \eqref{eq:pb} has a unique
minimizer, which we denote by $P_*$.

Besides the ground-state energy
$E(P_{*})$, we can compute from $P_*$ various other physical quantities of
interest (QoI), for instance, the electronic density and the interatomic forces. We
denote such a QoI by $A_*=A(P_*)$.

\medskip

We consider the case when $\Nc$ is too large for problem \eqref{eq:pb} to be solved completely in the reference approximation space. To solve problem \eqref{eq:pb}, we therefore consider a finite-dimensional subspace $\Xc$ of $\Cb^\Nc$ of dimension $N_b$ and solve instead the variational approximation of \eqref{eq:pb} in $\Xc$, namely
\begin{equation}
  \label{eq:pb_Nb}
  \min\{E(P), \; P\in \Mc_{\Nc}, \; \Ran(P) \subset \Xc \}.
\end{equation}
Our goal is then to estimate the errors $\norm{A(P)-A_*}$
on the QoI $A$, where $P$
is typically the minimizer of \eqref{eq:pb_Nb}, given the variational space
$\Xc$, and the norm is specific to the QoI. The latter can be a
scalar, e.g. the ground-state energy, or a finite or infinite
dimensional vector, e.g. the interactomic forces, or the ground-state
density.

\subsection{First-order geometry}

The manifold $\Mc_{\Nc}$ is a smooth manifold. Its tangent space $\Tc_P\Mc_{\Nc}$ at $P\in\Mc_{\Nc}$ is given by
\begin{align*}
  \Tc_P\Mc_{\Nc} &= \set{\deltaP\in\Hc \;\middle|\; P\deltaP + \deltaP P = \deltaP,\ \Tr(\deltaP) = 0} \\ & =
  \set{\deltaP\in\Hc \;\middle|\; P\deltaP P = 0, P^\perp \deltaP P^\perp= 0},
\end{align*}
where $P^\perp=1-P$ is the orthogonal projection on
$\Ran(P)^\perp$ for the canonical inner product of $\Cb^\Nc$. The set $\Tc_P\Mc_{\Nc}$ is the set of Hermitian
matrices that are off-diagonal in the block decomposition induced by
$P$ and $P^{\perp}$; more explicitly, if $P=U\prt{ \begin{array}{cc} I_{\Nel} & 0 \\ 0 & 0 \end{array}}U^*$ for some unitary $U \in {\rm U}(\Nc)$, then
\[
 % \mbox{if } P=U\prt{ \begin{array}{cc} I_{\Nel} & 0 \\ 0 & 0 \end{array}}U^* \mbox{ for some unitary } U \in {\rm U}(\Nc), \mbox{ then }
  \Tc_P\Mc_{\Nc} =\set{X=U\prt{ \begin{array}{cc} 0 & Y^{*} \\ Y &
        0 \end{array}}U^*,
    \; Y
    \in
    \Cb^{(\Nc-\Nel)
      \times
      \Nel}}.
\]
The orthogonal projection ${\bm \Pi_{P}}$ on $\Tc_P\Mc_{\Nc}$ for the Frobenius inner product is given by
\begin{equation}
  \label{eq:prop_proj}
  \Forall \deltaP\in\Hc,\quad {\bm \Pi_{P}}(\deltaP) = P \deltaP P^\perp + P^\perp \deltaP P = [P,[P,\deltaP]] \in \Tc_P\Mc_{\Nc},
\end{equation}
where $[A,B] \coloneqq AB - BA$ is the commutator of $A$ and $B$. Linear operators acting on spaces of matrices are sometimes referred to as \emph{super-operators} in the physics literature. Throughout this paper, super-operators will be written in bold fonts.

The mean-field Hamiltonian is the gradient of the energy at a given
point $P$ (again for the Frobenius inner product):
\[
  H(P) \coloneqq \nabla E(P) = H_{0} + \nabla \Enl(P).
\]
To simplify the notation, we set
\begin{align} \label{eq:defH*}
  H_{*} = H(P_{*}) = \nabla E(P_*).
\end{align}
The first-order optimality condition for \eqref{eq:pb} is that $\nabla E(P_*)$ is orthogonal to the tangent space $\Tc_{P_*}\Mc_{\Nc}$, which can be written, using \eqref{eq:prop_proj} and \eqref{eq:defH*}, as ${\bm \Pi_{P_*}} H(P_*) =[P_*,[P_*,H(P_*)]]=0$. This corresponds to the residual
\begin{align*}
  R(P) =  {\bm \Pi_{P}} H(P) = [P,[P,H(P)]]
\end{align*}
being zero at $P_{*}$. The residual function $R$ can be seen as a nonlinear map from $\Hc$ to itself, and its restriction to $\Mc_\Nc$ as a vector field on $\Mc_\Nc$ since for all $P \in \Mc_\Nc$, $R(P) \in \Tc_P\Mc_\Nc$.

\subsection{Second-order geometry}

We introduce the super-operators ${\bm \Op}(P)$ and ${\bm K}(P)$, defined at $P\in\Mc_N$ and acting on $\Hc$. These
operators were already introduced in~\cite[Section 2.2]{cancesConvergenceAnalysisDirect2021}, but we recall here their
definitions for completeness. To simplify the notation, we will set $\bm{K}_* \coloneqq \bm{K}(P_*)$, $\bm{\Op}_* \coloneqq \bm{\Op}(P_*)$.

\medskip

The super-operator $\bm{K}_* \in \Lc(\Hc)$ is the Hessian of the energy projected onto the tangent space to $\Mc_\Nc$ at $P_*$:
\begin{align} \label{eq:defK*}
  \bm{K}_{*} \coloneqq {\bm \Pi_{P_*}}{{\bm \nabla}^2E}(P_{*}){\bm \Pi_{P_*}} = {\bm \Pi_{P_*}}{{\bm \nabla}^2\Enl}(P_{*}){\bm \Pi_{P_*}}.
\end{align}
By construction, $\Tc_{P_*}\Mc_\Nc$ is an invariant subspace of $\bm{K}_{*}$. Note that $\bm{K}_{*} =0$ for linear eigenvalue problems, \ie when $E_{\rm  nl}=0$.

The super-operator $\bm{\Op}_* \in \Lc(\Hc)$ is defined by
\begin{align} \label{eq:defOmega*}
 \Forall X \in \Hc, \quad  \bm{\Op}_*\deltaP = -[P_*,[H_{*} ,\deltaP]].
\end{align}
The tangent space $\Tc_{P_*}\Mc_\Nc$ is also an invariant subspace of $\bm{\Op}_*$.

It is shown in~\cite{cancesConvergenceAnalysisDirect2021} that the energy of a density matrix
$P=P_{*} +\deltaP + O\prt{\normF{\deltaP}^{2}} \in \Mc_{\Nc}$ with
$\deltaP \in \Tc_{P_*}\Mc_{\Nc}$ is $E(P)=E(P_*)+\croF{\deltaP,
  (\bm{\Op}_*+\bm{K}_*) \deltaP} + o\prt{\normF{\deltaP}^{2}} $.
The restriction of the operator $\bm{\Op}_* + \bm{K}_*$ to the invariant subspace $\Tc_{P_*}\Mc_\Nc$ can therefore be identified with the second-order derivative of $E$ on the manifold $\Mc_\Nc$. Since $P_{*}$ is a minimum, it follows that
\begin{align*}
  \bm{\Op}_* + \bm{K}_* \gq 0 \quad \mbox{on } \Tc_{P_*}\Mc_\Nc.
\end{align*}

Note that in general, the second-order derivative of a function
defined on a smooth manifold is not an intrinsic object; it depends
not only on the tangent structure of the manifold, but also on the
chosen affine connection. However, at the critical point $P_{*}$ of
$E$ on the manifold, the contributions to the second derivative due
the connection vanish and the second-order derivative becomes
intrinsic.

For our purposes, it will be convenient to define this second-order
derivative also outside of $P_{*}$. Relying on
\eqref{eq:defK*}-\eqref{eq:defOmega*}, we define for any
$P \in \Mc_{\Nc}$ the super-operators $\bm{\Op}(P) \in \Lc(\Hc)$ and
$\bm{K}(P) \in \Lc(\Hc)$ through
\begin{align} \label{eq:def_Omega_K}
  \Forall X \in \Hc, \quad  {\bm \Op}(P) X &= -[P,[H(P) ,X]] \quad \mbox{and} \quad
  \bm{K}(P) X = {\bm \Pi_{P}}{{\bm \nabla}^2E}(P){\bm \Pi_{P}} X.
\end{align}
Both $\bm{\Op}(P)$ and $\bm{K}(P)$ admit $\Tc_P\Mc_\Nc$ as an invariant
subspace and their restrictions to $\Tc_P\Mc_\Nc$ are Hermitian for the
Frobenius inner product. The map $\Mc_\Nc \ni P \mapsto
\bm{\Op}(P)+\bm{K}(P) \in \Lc(\Hc)$ is smooth and the restriction of
$\bm{\Op}(P)+\bm{K}(P)$ to $\Tc_P\Mc_\Nc$ provides a computable
approximation of the second-order derivative of $E: \Mc_\Nc \to \Rb$ in
a neighborhood of $P_*$ (whatever the choice of the affine connection).

\subsection{Density matrix and orbitals}
The framework we have outlined above is particularly convenient for
stating the second-order conditions, but much too expensive
computationally as it requires the storage and manipulation of
(low-rank) large matrices. In practice, it is more effective to work
directly with orbitals, \ie write for any $P \in \Mc_{\Nc}$

\begin{align}
  \label{eq:identification_manifold}
  P = \Phi \Phi^{*} = \sum_{i=1}^{\Nel} \ket{\phi_{i}}\bra{\phi_{i}}
\end{align}

\noindent
where $\Phi = (\phi_{1}|\cdots|\phi_{\Nel})$ is a collection of $\Nel$
orbitals $\phi_{i} \in \Cb^{\Nc}$ satisfying $\Phi^{*}\Phi=I_{\Nel}$ and $\Span(\phi_1,\dots,\phi_\Nel)=\Ran(P)$, and where we used Dirac's bra-ket notation: for $\phi,\psi \in \Cb^\Nc$, $\cro{\phi,\psi} =\phi^*\psi$ and $\ket{\phi}\bra{\psi}=\phi\psi^*$.
Problem \eqref{eq:pb} can be
reformulated as
\begin{align*}
  \min \set{E(\Phi \Phi^{*}), \Phi \in \Cb^{\Nc \times \Nel}, \Phi^{*} \Phi = I_{\Nel}}.
\end{align*}
Note that the orbitals are only defined up to a unitary transform: if
$U \in {\rm U}(\Nel)$ is a unitary matrix, then $\widetilde \Phi \coloneqq \Phi U$ and $\Phi$ give
rise to the same density matrix. This means that the minimizers of
this minimization problem are never isolated, which creates technical
difficulties that are not present in the density matrix formalism.

Let us fix a $\Phi=(\phi_1|\cdots|\phi_\Nel) \in \Cb^{\Nc \times \Nel}$ with $\Phi^{*} \Phi =
I_{\Nel}$, and consider an element $X$ of the tangent plane $\Tc_{\Phi \Phi^{*}}\Mc_{\Nc}$.
By completing $\Phi$ to an orthogonal basis and writing out $X$ in
this basis, it is easy to see that
the constraints $X^{*}=X$, $P X P=0$, $P^{\perp} X P^{\perp} =0$ imply that
$X$ can be put in the form

\begin{equation}
  \label{eq:identification_tangent_plane}
  X = \sum_{i=1}^{\Nel} \ket{\phi_{i}}\bra{\deltaphi_{i}} + \ket{\deltaphi_{i}}\bra{\phi_{i}} = \Phi \deltaPhi^{*} + \deltaPhi \Phi^{*}
\end{equation}

\noindent
where $\deltaPhi = (\deltaphi_{1}|\cdots|\deltaphi_{\Nel}) \in \Cb^{\Nc \times \Nel}$ is a
set of orbital variations satisfying $\Phi^{*}\deltaPhi=0$.
Furthermore, under this condition, $\deltaPhi$ is unique, so that
\eqref{eq:identification_tangent_plane} establishes a bijection
between $\Tc_{\Phi \Phi^{*}}\Mc_{\Nc}$ and $\{\deltaPhi \in \Cb^{\Nc \times
  \Nel} \; | \;  \Phi^{*}\deltaPhi = 0\}$. We will therefore treat equivalently elements
of the tangent space $\Tc_{P}\Mc_{\Nc}$ either in the density matrix
representation $X$ or the orbital representation $\deltaPhi$, writing
\begin{align}
  \deltaPhi \simeq_{\Phi} X.
\end{align}
This orbital representation of $P$ by $\Phi$ is more
economical computationally, only requiring the storage and
manipulation of orbitals $\Phi \in \Cb^{\Nc \times \Nel}$ satisfying
$\Phi^{*} \Phi=I_{\Nel}$. Similarly, the manipulation of objects $X$ in
the tangent plane $\Tc_{\Phi \Phi^{*}}\Mc_{\Nc}$ is more efficiently
done through their orbital variations $\deltaPhi \in \Cb^{\Nc \times \Nel}$
satisfying $\Phi^{*}\deltaPhi=0$.

All operations on density matrices or their variations can indeed be carried
out in this orbital representation. For instance, the computation of
the energy can be performed efficiently in practice, as explained in
\autoref{sec:kohn_sham}, and the residual at $P=\Phi\Phi^*$ also has a nice representation in terms of orbitals:
\begin{align*}
  R(\Phi \Phi^{*}) \simeq_{\Phi} H \Phi - \Phi (\Phi^{*}H\Phi) \quad \mbox{with $H$ evaluated at $\Phi \Phi^{*}$,}
\end{align*}
which is easily recognized as similar to the residual of a linear eigenvalue problem.

Likewise, operators on $\Tc_{\Phi \Phi^{*}}\Mc_{\Nc}$ can be identified
in this fashion. For instance,
\begin{equation}\label{eq:Omega_orbitals}
  {\bm{\Op}(\Phi\Phi^{*})} (\Phi \deltaPhi^{*} + \deltaPhi \Phi^{*}) \simeq_{\Phi} P^{\perp}(H \deltaPhi-\deltaPhi(\Phi^{*} H\Phi)) \quad \mbox{with $H$ evaluated at $\Phi \Phi^{*}$.}
\end{equation}
The computation of ${\bm{K}}$ can be performed similarly:
\begin{equation}\label{eq:K_orbitals}
  {\bm{K}(\Phi\Phi^{*})} (\Phi \deltaPhi^{*} + \deltaPhi \Phi^{*}) \simeq_{\Phi} P^{\perp}(\delta H \phi_i)_{i=1,\dots,\Nel} \quad \mbox{with } \delta H = \frac{\d H}{\d P}(\Phi \deltaPhi^{*} + \deltaPhi \Phi^{*}).
\end{equation}
Finally, note that all the numerical results in this article are performed using the orbital formalism.

\begin{remark}
  Note that the condition that $\Phi^{*}\deltaPhi=0$ is not necessary for
  $\Phi \deltaPhi^{*} + \deltaPhi \Phi^{*}$ to define an element of
  $\Tc_{\Phi \Phi^{*}}\Mc_{\Nc}$. However, without this gauge
  condition, $\deltaPhi$ is not unique. This is simply a manifestation
  at the infinitesimal level of the noninjectivity of the map
  $\Phi \mapsto \Phi\Phi^{*}$ between $\{\Phi \in \Cb^{\Nc \times \Nel},
  \Phi^{*} \Phi = I_{\Nel}\}$ and $\Mc_{\Nc}$. Because of this, the
  derivative $\deltaPhi \mapsto \Phi \deltaPhi^{*} + \deltaPhi \Phi^{*}$ is not
  injective between the tangent spaces $\{\deltaPhi \in \Cb^{\Nc \times \Nel},
  \Phi^{*} \deltaPhi + \deltaPhi^{*} \Phi = 0\}$ and $\Tc_{\Phi \Phi^{*}}\Mc_{\Nc}$.
  In more concrete terms, in the example case where $\phi_{1},
  \dots, \phi_{N_{\rm el}}$ are the first $N_{\rm el}$ basis vectors, any
  element $X$ is of the form
  $  \begin{pmatrix}
    0&Z^*\\
    Z&0
  \end{pmatrix}
  $ which can be written in the form
  \eqref{eq:identification_tangent_plane} with $\deltaPhi =
  \begin{pmatrix}
    0\\Z
  \end{pmatrix}
  $. However,
  such an $X$ can also be written in the form
  \eqref{eq:identification_tangent_plane} with $\deltaPhi =
  \begin{pmatrix}
    A\\Z
  \end{pmatrix}
  $ for any
  anti-hermitian matrix $A$. The gauge condition $\Phi^{*}\deltaPhi=0$ forces
  $A$ to be zero, making $\deltaPhi$ unique.
  In more formal terms, the map $\Phi \mapsto \Phi \Phi^{*}$
  induces a principal bundle structure on the base space $\Mc_{\Nc}$ (the
  Grassmann manifold) with total space
  $\{\Phi \in \Cb^{\Nc \times \Nel}, \Phi^{*} \Phi = I_{\Nel}\}$ (the
  Stiefel manifold) and characteristic fiber ${\rm U}(N_{\rm el})$. This naturally
  splits the tangent space
  $\{\deltaPhi \in \Cb^{\Nc \times \Nel}, \Phi^{*} \deltaPhi + \deltaPhi^{*} \Phi = 0\}$ into
  the {\em vertical space} $\{\Phi A, \; A \mbox{ anti-hermitian}\}$, and a
  complementary {\em horizontal space}, which we take to be the orthogonal
  complement, $\{\deltaPhi \in \Cb^{\Nc \times \Nel} \; | \; \Phi^{*}\deltaPhi=0\}$.
\end{remark}

\medskip

The orbital formalism can be used to give a more concrete interpretation of the first-order optimality condition $R(P_*)=0$. Indeed, this condition can be rewritten as
\begin{align*}
  P_{*} H_{*} P^{\perp}_{*} = 0, \quad P^{\perp}_{*} H_{*} P_{*} = 0,
\end{align*}
from which it follows that $P_{*}$ and $H_{*}=H(P_{*})$ can be jointly diagonalized in an orthonormal basis:
\begin{equation}\label{eq:joint_diag}
  H_*\phi_{*n}=\lambda_{*n}\phi_{*n}, \quad  \cro{\phi_{*m},\phi_{*n}}=\delta_{mn}, \quad P_*= \sum_{n=1}^{\Nel} \ket{\phi_{*n}}\bra{\phi_{*n}}.
\end{equation}

In many applications, the orbitals $\phi_{*1},\dots,\phi_{*\Nel}$
spanning the range of $P_*$ (see
\eqref{eq:joint_diag}) are those corresponding to the lowest $\Nel$
eigenvalues of $H_*$. This is called the {\it Aufbau} principle in
physics and chemistry. This principle is always satisfied in the
(unrestricted) Hartree--Fock setting, and most of the times in the
Kohn--Sham setting. Under the {\it Aufbau} principle, we can assume
that the $\lambda_n$'s are ranked in nondecreasing order.
The orbitals $\phi_i$, $1 \lq i \lq \Nel$, are called occupied, and the orbitals $\phi_a$, $\Nel \lq a \lq \Nc$, are called virtual (it is customary to label the occupied orbitals by indices $i,j,k,l$, and virtual orbitals by indices $a,b,c,d$).
The operator $\bm{\Op}_*$ can be written explicitly using the tensor basis $\phi_{*m} \otimes \phi_{*n}$. We have indeed
\[
  \bm{\Op}_*=\sum_{i=1}^\Nel \sum_{a=\Nel+1}^{\Nc} (\lambda_a-\lambda_i) \prt{ \ket{\phi_{*i}\otimes\phi_{*a}}\bra{\phi_{*i}\otimes\phi_{*a}} + \ket{\phi_{*a}\otimes\phi_{*i}}\bra{\phi_{*a}\otimes\phi_{*i}}  },
\]
and it follows that the lowest eigenvalue of the restriction of $\bm{\Op}_*$ to $\Tc_{P_*}\Mc_\Nc$ is $\lambda_{\Nel+1}-\lambda_\Nel \gq 0$. The operator $\bm{\Op}_*$ is therefore positive on $\Tc_{P_*}\Mc_\Nc$, and coercive if there is an energy gap between the $\Nel^{\rm th}$ and $(\Nel+1)^{\rm st}$ eigenvalues of $H_*$ (see e.g.~\cite{cancesConvergenceAnalysisDirect2021}).

\subsection{Metrics on the tangent space}\label{sec:metrics}
The isomorphism between $X = \Phi \deltaPhi^{*} + \deltaPhi \Phi^{*} \in \Tc_{\Phi \Phi^{*}}\Mc_{\Nc}$ and the set of orbital
variations $\deltaPhi \in \Cb^{\Nc \times \Nel}$ with $\Phi^{*}\deltaPhi =
0$ is unitary under the Frobenius inner product up to a factor of $2$:
$
  \normF{X}^{2} = 2 \normF{\deltaPhi}^{2}
 $.

In practice, it is often advantageous to work using different inner
products. This is in particular the case for partial differential
equations involving unbounded operators, where using Sobolev-type
metrics better respects the natural analytic structure of the problem
and therefore allows for better bounds, compare e.g. the results of
(5.34) and (5.35) on Figure 4
in~\cite{cancesGuaranteedPosterioriBounds2020}. To that end, consider
a metric on $\Cb^{\Nc}$ given by
\begin{align*}
  \cro{\deltaphi_{1}, \deltaphi_{2}}_{T} = \cro{\deltaphi_{1},  T \deltaphi_{2}}.
\end{align*}
Here $T$ is a coercive Hermitian operator on $\Cb^\Nc$ representing the metric; for instance, taking
$T$ to be a discretization of the operator $1-\Delta$ we recover the
classical Sobolev $\H^{1}$ norm.
A basic problem is that the projection $P^\perp$ onto the orthogonal complement of
$\Ran(P)$ does not necessarily commute with $T$. As a
result, there are various nonequivalent ways to lift this metric to
one on the tangent space $\Tc_{\Phi \Phi^{*}}\Mc_{\Nc}$. We select here
the computationally simplest. The operator
\begin{align}  \label{eq:def_M}
  M = {P^\perp}T^{1/2}{P^\perp}T^{1/2}{P^\perp}
\end{align}
is positive definite on the subspace $\Ran(P)^{\perp}$ of $\Cb^{\Nc}$, and
induces a metric $\cro{\deltaphi_{1}, M \deltaphi_{2}}$ on that space.
The point of this formulation is to make it easy to compute
$M^{1/2} = {P^\perp}T^{1/2}{P^\perp}$. Note that, since ${P^\perp}$  and
$T$ do not commute,
$M^{-1/2} \neq {P^\perp}T^{-1/2}{P^\perp}$. However,
${P^\perp}T^{-1/2}{P^\perp} M^{1/2}$ is well-conditioned, so that computing
the action of $M^{-1/2}$ on a vector can be performed efficiently by an
iterative algorithm involving repeated applications of the
operators $T^{1/2}$ and $T^{-1/2}$. The same holds for $M^{-1}$.
Furthermore, practical numerical results are typically not very
sensitive to these issues, so that other (nonequivalent) reasonable alternatives to \eqref{eq:def_M} yield similar results.

The metric on $\Ran(P)^{\perp}$ immediately induces a metric on $\Tc_{\Phi
  \Phi^{*}}\Mc_{\Nc}$ given by, in the orbital representation associated with $\Phi$,
\begin{align*}
  \cro{\deltaPhi_{1},\deltaPhi_{2}}_{{\bm M}} = \Re\prt{\Tr(\deltaPhi_{1}^{*} M \deltaPhi_{2})}
  = \sum_{i=1}^{\Nel} \Re\prt{\cro{\deltaphi_{1,i}, M \deltaphi_{2,i}}},
\end{align*}
for
$\deltaPhi_{1}=(\deltaphi_{1,i})_{1\lq i\lq \Nel},\deltaPhi_{2}=(\deltaphi_{2,i})_{1\lq i\lq \Nel}$. This defines an
operator ${\bm M}$ on $\Tc_{\Phi \Phi^{*}}\Mc_{\Nc}$ through the
relationship ${\bm M} X \simeq_\Phi (M \deltaphi_{i})_{1\lq i\lq \Nel}$ when $X \simeq_\Phi (\deltaphi_{i})_{1\lq i\lq \Nel}$.
Similarly to $M$, we can compute powers and inverses of ${\bm M}$
easily.

This formalism has the disadvantage that the same metric is used for
every orbital variation. In practice this may not be sensible, as
different orbitals can correspond to different energy ranges.
Therefore we slightly modify the above formalism by applying a
different metric on each individual orbital variation, following
standard practice used in preconditioners for plane-wave density
functional theory~\cite{payneIterativeMinimizationTechniques1992}. Introducing a family $(T_1,\dots,T_\Nel)$ of coercive Hermitian operators on $\Cb^\Nel$, we set
\begin{equation}\label{eq:defMi}
  M_{i} \coloneqq {P^\perp}T^{1/2}_{i}{P^\perp}T^{1/2}_{i}{P^\perp} \quad \mbox{and} \quad  {\bm M} X \simeq_\Phi
  (M_{i} \deltaphi_{i})_{1\lq i\lq \Nel}.
\end{equation}

\subsection{Correspondence rules}
As explained above, the density matrices will be preferred for the mathematical analysis while orbitals will be used in practice in the numerical simulations. We summarize below the correspondence between the density matrix and molecular orbital formulations, and the practical way the different operators we introduced are computed.
For a given $P\in\Mc_{\Nc}$ and $(\phi_i)_{1\lq i\lq \Nel}$ the
associated set of occupied orbitals, there holds
\begin{equation*}
    \small
    \begin{array}{rclc}
        & & \\
        P \in \Mc_{\Nc}  & \leftrightarrow & \Phi=(\phi_1|\cdots|\phi_{\Nel}) \in \Cb^{\Nc\times\Nel} \mbox{ s.t. } P = \Phi\Phi^* & \mbox{(state)}, \\ & & \\
        \deltaP \in \Tc_P\Mc_{\Nc} &\leftrightarrow & \deltaPhi\in\Cb^{\Nc\times\Nel}
        \mbox{ s.t. } \Phi^*\deltaPhi=0& \mbox{(perturbation)},  \\ & & \\
        R(P) = [P,[P,H(P)]] & \leftrightarrow & (r_1|\cdots|r_{\Nel}) = P^\perp H(P) \Phi & \mbox{(residual)}, \\ & & \\
        \bm{\Op}(P)(X)  &\leftrightarrow & P^{\perp}(H \deltaPhi-\deltaPhi(\Phi^{*} H\Phi)) & \mbox{(see \eqref{eq:Omega_orbitals})}, \\ & & \\
        \bm{K}(P)(X) &\leftrightarrow & P^{\perp}(\delta H \phi_i)_{1\lq i\lq\Nel}  & \mbox{(see \eqref{eq:K_orbitals})}, \\ & & \\
        \bm{M}^{s}\deltaP &\leftrightarrow & \prt{M_i^s\deltaphi_i}_{1\lq i \lq \Nel} \text{ for } s=-1,-1/2,1/2,1 & \mbox{(see \eqref{eq:defMi})}. \\
        & & \\
    \end{array}
\end{equation*}

\section{The periodic Kohn--Sham problem}
\label{sec:kohn_sham}
\subsection{The continuous problem}
We consider an $\Rc$-periodic system, $\Rc$ being a Bravais lattice
with unit cell $\Gamma$ and reciprocal lattice $\Rc^*$ (the set of
vectors ${G}$ such that ${G} \cdot {R} \in 2\pi \mathbb Z$
for all ${R} \in \Rc$). For the sake of simplicity, we present
here the formalism for the (artificial) Kohn--Sham model for a finite
system of $\Nel$ electrons on the unit cell $\Gamma$ with
periodic boundary conditions. This is distinct from the more physical
periodic Kohn--Sham problem for an infinite crystal with $\Nel$
electrons by unit cell, which is usually treated by using the
supercell approach and Bloch
theorem. Practical computations are performed for the latter model
using Monkhorst-Pack Brillouin zone sampling
\cite{monkhorstSpecialPointsBrillouinzone1976} (see also~\cite{cancesNumericalQuadratureBrillouin2020} for a mathematical analysis of this method). The mathematical framework is very similar, with additional sums over $k$ points.

At the continuous level, a Kohn--Sham state is described by a density matrix $\gamma$, a rank-$\Nel$ orthogonal projector acting on the space
$\L^{2}_{\#}$ of square integrable periodic functions. Ignoring constant terms modeling interactions between ions (\ie atomic nuclear and frozen core electrons), the Kohn--Sham energy of $\gamma$ is given by $E^{\rm KS}(\gamma) = \Tr(h_0\gamma) + E^{\rm Hxc}(\rho_\gamma)$ (the superscript Hxc stands for Hartree-exchange-correlation), with
\[
  h_{0} = - \frac 1 2 \Delta + v_{\rm loc} + v_{\rm nloc},\quad
  E^{\rm Hxc}(\rho) =  \int_{\Gamma} \left( \frac 1 2 \rho V_{\rm H}(\rho)({x}) + e_{\rm xc}(\rho({x}))\right) \d {x}.
\]
In the above expressions, $\rho_{\gamma}$ is the density associated with the trace-class operator $\gamma$ (formally $\rho_\gamma({x})=\gamma({x},{x})$ where $\gamma({x},{x}')$ is the integral kernel of $\gamma$), $e_{\rm xc}:\Rb_+ \to \Rb$ a given
exchange-correlation energy, and $V_{\rm H}(\rho)$ the Hartree potential, defined as the unique
periodic solution with zero mean of the Poisson equation $-\Delta V_{\rm H}(\rho) = 4\pi \prt{ \rho - \fint_\Gamma \rho }$. In the pseudopotential approximation that we use in our numerical
results, $v_{\rm loc}$ is a local potential given by
\begin{equation}\label{eq:pot_loc}
  \Forall x \in \Rb^3, \quad V_{\rm{loc}}(x) \coloneqq \sum_{R\in\Rc}\sum_{j =1}^{N_{\rm at}} v_{\rm{loc}}^j\prt{x-(X_j+R)},
\end{equation}
and $v_{\rm nloc}$ a nonlocal potential in Kleinmann-Bylander~\cite{kleinmanEfficaciousFormModel1982} form given by
\begin{equation}\label{eq:pot_nonloc}
  V_{\rm{nloc}} \coloneqq \sum_{R\in\Rc}\sum_{j=1}^{N_{\rm at}}
  \sum_{a,b = 1}^{n_{\text{proj}, j}} \ket{p^j_{a}(\cdot-(X_j+R))}C^j_{ab}\bra{p^j_{b}(\cdot-(X_j+R))},
\end{equation}
where $N_{\rm at}$ is the number of atoms in $\Gamma$,
the ${X}_{j}$'s are the positions of the atoms inside the unit
cell $\Gamma$, $v_{\rm loc}^{j} : \Rb^{3} \to \Rb$ is a local radial potential,
$n_{\text{proj},j}$ denotes the number of projectors for atom $j$,
and the $p^{j}_{ab} : \Rb^{3} \to \Cb$ are given smooth functions. We use in
particular the Goedecker--Teter--Hutter (GTH) pseudopotentials
\cite{goedecker1996separable,hartwigsenRelativisticSeparableDualspace1998}
whose functional forms for the $v_{\rm loc}^{j}$ and $p^{j}_{ab}$ are
analytic ($v_{\rm loc}^{j}$ is a radial Gaussian function multiplied by a radial polynomial,
and $p^{j}_{ab}$ is a radial Gaussian function multiplied by a solid spherical harmonics).

The Kohn--Sham Hamiltonian associated to a density matrix $\gamma$ is given by
\begin{align*}
  h_\gamma = h_0 + V_{\rm H}(\rho_{\gamma}) + e'_{\rm xc}(\rho_{\gamma}),
\end{align*}
where $V_{\rm H}(\rho_{\gamma}) $ and $e'_{\rm xc}(\rho_{\gamma})$ are
interpreted as local (multiplication) operators. Similarly, we have
\begin{align*}
  D^2_\gamma (E^{\rm Hxc}(\rho_\gamma)) \cdot Q = V_{\rm H}(\rho_{Q}) + e''_{\rm xc}(\rho_{\gamma}) \rho_{Q}.
\end{align*}

\begin{remark}[Spin]
  \label{rem:spin}
  The expressions above are given for a system of ``spinless
  electrons'' to accomodate the simple geometrical formalism of
  Section~\ref{sec:sec2}. Real systems (and the numerical simulations
  we perform in the following sections) include spin; in this case,
  the energy is $E^{\rm KS}(\gamma) = 2\Tr(h_0\gamma) + E^{\rm
    Hxc}(\rho_\gamma)$, where $\rho_{\gamma}(x) = 2 \gamma(x,x)$,
  $\nabla E^{\rm KS} (\gamma) = 2(h_0 + V_{\rm H}(\rho_{\gamma}) + e'_{\rm
    xc}(\rho_{\gamma}))$ and $D^2 (E^{\rm KS}(\gamma)) \cdot Q
  = 4 V_{\rm H}(\rho_{Q}) + 4 e''_{\rm xc}(\rho_{\gamma}) \rho_{Q}.$
\end{remark}

\subsection{Discretization}\label{sec:discr}

For each vector $G$ of the reciprocal lattice $\Rc^*$, we denote by $e_{G}$ the Fourier mode with wave-vector $G$:
\[
  \Forall x\in \Rb^3, \quad e_{G}(x) \coloneqq \frac{1}{\sqrt{\abs{\Gamma}}} \exp\prt{\i G
    \cdot x}
\]
where $\abs{\Gamma}$ is the Lebesgue measure of the unit cell $\Gamma$. The family
$(e_{G})_{G\in\Rc^*}$ is an orthonormal basis of $\L^2_\#$, the space of locally square integrable $\Rc$-periodic functions (and an orthogonal basis of the $\Rc$-periodic Sobolev space $\H^s_\#$, endowed with its usual inner product, for any $s \in \Rb$). In the so-called plane-wave discretization methods, the Kohn--Sham model is discretized using the finite-dimensional approximation spaces
\[
  \Xc_\Ecut \coloneqq \Span\set{e_{G}, \; G\in\Rc^* \; \middle| \;
    \frac{1}{2}\abs{G}^2 \lq \Ecut},
\]
where $\Ecut>0$ is a given energy cut-off chosen by the user.

\medskip

The connection with the formalism introduced in \autoref{sec:sec2} is the following:
\begin{itemize}
  \item we choose a large reference energy cut-off $E_{\rm cut,ref}$ and set
    \[
      \Nc\coloneqq{\rm dim}(\Xc_{E_{\rm cut,ref}})=\#\set{{G} \in \Rc^* \; | \; \frac 12 |{G}|^2 \lq E_{\rm cut,ref}};
    \]
  \item we identify $\Xc_{E_{\rm cut,ref}}$ with $\Cb^\Nc$ by labelling the reciprocal lattice vectors from $1$ to $\Nc$ in such a way that for all $1 \lq i < j \lq \Nc$, $|{G}_i| \lq |{G}_j|$;
  \item the set of rank-$\Nel$ orthogonal projectors $\gamma$ on $\L^2_\#$ such that $\Ran(\gamma) \subset \Xc_{E_{\rm cut,ref}}$ can then be identified with the manifold $\Mc_\Nc$ defined in \eqref{eq:pb} through the mapping
    \[
      \gamma = \sum_{i,j=1}^\Nc P_{ij} \ket{e_{{G}_i}}\bra{e_{{G}_j}};
    \]
  \item the noninteracting Hamiltonian matrix $H_0 \in \Cb^{\Nc \times \Nc}$ has entries
    \[
      [H_0]_{ij} = \bak{e_{{G}_i}}{h_0}{e_{{G}_j}}_{\L^2_\#},
    \]
    and the nonlinear component of the energy $E_{\rm nl}:\Hc \to \Rb$ is any $C^2$-extension of the function defined on $\Mc_\Nc$ by
    \[
     E_{\rm nl}(P)=E^{\rm Hxc}(\rho_P) \quad \mbox{where} \quad \rho_P({x}) %=|\Gamma|^{-1} \sum_{i,j=1}^\Nc P_{ij} \e^{\i({G}_j-{G}_i) \cdot {x}}
     = |\Gamma|^{-1/2} \sum_{i,j=1}^\Nc P_{ij} e_{G_j-G_i}({x}).
    \]
\end{itemize}

The entries of the core Hamiltonian matrix can be computed explicitly:
$$
  [H_0]_{ij} =  \frac{\abs{G_i}^2}{2} \delta_{i,j} + [V_{\rm loc}]_{ij} + [V_{\rm nloc}]_{ij}
  $$
  $$
  \mbox{with} \quad   [V_{\rm loc}]_{ij} = \bak{e_{G_i}}{V_{\rm loc}}{e_{G_j}}_{\L^2_\#} \quad \mbox{and} \quad  [V_{\rm nloc}]_{ij} = \bak{e_{G_i}}{V_{\rm nloc}}{e_{G_j}}_{\L^2_\#},
$$
where the above inner products can be computed exactly through the Fourier
transforms of the $v_{\rm loc}^{j}$ and $p_{ab}^{j}$ (known exactly for GTH pseudopotentials). Note also that the
density $\rho_{P}$ can be expanded on a finite number of Fourier modes and can therefore be easily stored in memory. Since the
Poisson equation is trivially solvable in the plane-wave basis, this
enables the exact computation of the Hartree energy. The
exchange-correlation energy however cannot be computed explicitly, and is
approximated using numerical quadrature. In all the numerical results,
we select the parameters of this numerical quadrature such that it
does not affect too much the results, see Remark~\ref{rmk:sampl}.

\subsection{Forces}
\label{sec:forces}
The total ground-state energy depends on the atomic positions $ \mathfrak{X} =
(X_{j})_{1\lq j \lq N_{\rm at}}$ both explicitly (ion-ion interaction energy and ion-electron interaction potentials $V_{\rm loc}$ and $V_{\rm nloc}$) and
through the fact that the solution $P_{*}$ depends on $\mathfrak{X} $:
\begin{align*}
  {\mathcal E}({\mathfrak{X}}) = E({\mathfrak{X}}, P_{*}({\mathfrak{X}})).
\end{align*}
The force acting on atom $j$ is defined as $F_{j}(\mathfrak{X}) =
-\nabla_{X_j} {\mathcal E}(\mathfrak{X})$.  Because of the Hellman--Feynman theorem, the term involving
the derivative of $P_{*}$ with respect to $X_{j}$ vanishes~\cite{martinElectronicStructureBasic2004}, and the final result is
\begin{align} \label{eq:def_Fj}
  F_{j} = - \Tr\prt{(\nabla_{{X}_j} (V_{\rm loc}+V_{\rm nloc})) P_{*}}.
\end{align}
This involves the partial derivatives of the matrix elements of
$V_{\rm loc} + V_{\rm nloc}$ with respect to the atomic positions,
which can be computed analytically from \eqref{eq:pot_loc} and \eqref{eq:pot_nonloc}.

\subsection{Numerical setup}
For all the computations and examples on silicon, we use the DFTK
software~\cite{herbstDFTKJulianApproach2021} within the LDA
approximation, with Teter 93 exchange-correlation functional
\cite{goedecker1996separable} and a $2\times2\times2$ ${k}$-point
grid, and a reference solution computed with $\Ecutref = 125$ Ha, to
which we compare results obtained with smaller values of $\Ecut$. We checked that $\Ecutref = 125$ Ha was a high enough energy cut-off to have fully converged results, up to the accuracy we need to test our numerical methods. We use the
usual periodic lattice for the FCC phase of silicon, with lattice constant
$a=10.26$ bohrs, close to the equilibrium configuration. All results are expressed in atomic units: energies are in hartree and forces are in hartree/bohr. Note that the discretization grid of the Brillouin zone is not fine enough to have fully converged results, but is still sufficient to illustrate our points. Note also that the same results are observed for semilocal functionals, such as PBE-GGA \cite{perdewGeneralizedGradientApproximation1996}. Other functionals, such as meta-GGA and hybrid functionals, are out of the scope of this paper.

The two atoms of silicon inside a cell are placed at first at their equilibrium positions with fractional coordinates $(-\frac 1 8, -\frac 1 8, -\frac 1 8)$ and
$(\frac 1 8, \frac 1 8, \frac 1 8)$, and then the second one is slightly displaced by $\frac 1 {20} (0.24, -0.33, 0.12)$
to get nonzero interatomic forces.

\medskip

The discretized Kohn--Sham equations are solved by a standard SCF procedure. The main computational bottleneck is the
partial diagonalization of the mean-field Hamiltonian at each SCF step. This is done using an iterative eigenvalue solver,
which only requires applying mean-field Hamiltonian matrices to a set of $\Nel$ trial orbitals and simple operations on vectors. In a plane-wave basis set of size $N_b$, the former operation can be done efficiently through the use of the fast Fourier transform
for a total cost of $O(\Nel N_{b}(\log N_{b} + \sum_{j} n_{\text{proj},j}))$. We
refer to~\cite{martinElectronicStructureBasic2004} for more details. The
application of the super-operators ${\bm \Op}$ and ${\bm K}$
to a set of $\Nel$ orbital variations (see \eqref{eq:Omega_orbitals} and \eqref{eq:K_orbitals}) involves additional linear
algebra operations, for an additional cost of $O(\Nel^{2}(N_{b} + \Nel))$.

\medskip

In this setting, the reference values for the energy is $E_* = -7.838 $ Ha and the interatomic forces are, in hartree/bohr,
\[
F_* = \begin{bmatrix}
 -0.0656 &  0.0656 \\
  0.0619 & -0.0619 \\
 -0.0352 &  0.0352 \\
\end{bmatrix},
\]
where the first column are the forces acting on the first atom in each direction, and the second column are the forces acting on the second atom.

\section{A first error bound using linearization}
\label{sec:sec3}

Now that the mathematical and numerical frameworks are laid down, we
turn to the estimation of the error between the reference solution
computed with a large energy cut-off $\Ecutref$ and approximations thereof. We first start by
deriving a linearization estimate and illustrating numerically its
applicability. We then propose a very coarse bound on the error on the
density matrix and the forces, based on the (expensive) evaluation of an operator norm. We will show in the next section how to improve this bound.

\subsection{Linearization in the asymptotic regime}

We assume that $P_*$ is a nondegenerate local minimizer of $E$ in the
sense that there exists $\eta > 0$ such that
$\bm{\Op}_* + \bm{K}_* \gq \eta$ on the tangent space $\Tc_{P_*}\Mc_{\Nc}$. This implies in particular that $\bm{\Op}_* + \bm{K}_*$ is invertible on the invariant subspace $\Tc_{P_*}\Mc_\Nc$.

Recall that for any trial density matrix $P \in \Mc_\Nc$, the residual of the problem is
\begin{align*}
  R(P) = \bm{\Pi}_{P} H(P) = [P,[P,H(P)]] \in \Tc_P\Mc_\Nc,
\end{align*}
so that $R$ defines a smooth vector field on $\Mc_{\Nc}$ (a section of the tangent bundle $\Tc_P\Mc_\Nc$) which vanishes at $P_*$. For $P \in \Mc_\Nc$ in the vicinity of $P_*$, we have
\begin{equation}\label{eq:P-P*}
  P-P_*= \bm{\Pi}_{P_*}(P-P_*)+O\prt{\normF{P-P_*}^2} = \bm{\Pi}_{P}(P-P_*)+O\prt{\normF{P-P_*}^2}.
\end{equation}
It follows from the definitions \eqref{eq:defK*}-\eqref{eq:defOmega*} of $\bm{\Op}_*$ and
$\bm{K}_*$ that $\bm{\Op}_* + \bm{K}_*$ is the Jacobian of the map
$P \mapsto R(P)$ at $P_*$.  Therefore, the optimality condition $R(P_*)=0$ and the
above expansions yield, for all $P \in \Mc_\Nc$ close enough to $P_{*}$,
\begin{equation}\label{eq:RP}
    \begin{split}
      R(P) &=  (\bm{\Op}_* + \bm{K}_*)\bm{\Pi}_{P_*}(P-P_*) + O\prt{\normF{P-P_*}^2}\\
      &= (\bm{\Op}(P) + \bm{K}(P))\bm{\Pi}_{P} (P-P_*) + O\prt{\normF{P-P_*}^2}.
    \end{split}
\end{equation}
By continuity, $\bm{\Op}(P) + \bm{K}(P) \gq \frac \eta 2$ on the
tangent space $\Tc_{P}\Mc_{\Nc}$ for $P \in \Mc_\Nc$ close enough to
$P_*$, so that the restriction of the super-operator $\bm{\Op}(P) +
\bm{K}(P)$ to the invariant subspace $\Tc_{P}\Mc_{\Nc}$ is
self-adjoint and invertible. Using again \eqref{eq:P-P*} and the
fact that $R(P) \in \Tc_P\Mc_\Nc$, we obtain, after inversion of
$\bm{\Op}(P) + \bm{K}(P)$ on the tangent space,
\begin{equation}
  \label{eq:linearize}
  \boxed{P-P_* =  ((\bm{\Op}(P) + \bm{K}(P))|_{\Tc_P\Mc_\Nc})^{-1}R(P) + O\prt{\normF{P-P_*}^2}.}
\end{equation}
This error-residual equation is the analog in our case of the
linearization~\eqref{eq:linearization}, which identifies the super-operator
$\bm{\Op}(P)+\bm{K}(P)$ as the fundamental object in our study.

Based on this expansion, we can formulate the Newton
algorithm to solve the equation $R(P_*)=0$:
\[
  P^{k+1} = \mathfrak{R}_{P^k}\prt{P^{k}-(\bm{\Op}^k + \bm{K}^k)^{-1}R(P^k)},
\]
where $\bm{\Op}^k\coloneqq\bm{\Op}(P^k)|_{\Tc_{P_k}\Mc_\Nc}$ and
$\bm{K}^k\coloneqq\bm{K}(P^k)|_{\Tc_{P_k}\Mc_\Nc}$ and $\mathfrak{R}$ is a suitable
retraction on $\Mc_\Nc$. A possible
retraction is given in~\cite{cancesConvergenceAnalysisDirect2021}.
This Newton algorithm is expensive in practice, as it requires to solve
iteratively a linear system; the cost of a Newton step is comparable to that of a full
self-consistent field cycle. It is however a useful theoretical
tool, and a starting point for further analysis and approximations.

\bigskip

To check the validity of the linearization \eqref{eq:linearize}, we
focus on three quantities of interest: the ground state energy,
the ground-density density, and the
interatomic forces acting on the two atoms in $\Gamma$. The reference values $E_*$, $\rho_*$ and $F_*$ of these QoIs are those obtained with the very large energy cut-off $E_{\rm cut,ref}=125$ Ha, defining a ``fine grid'' in real space via the discrete Fourier transform. For $E_{\rm cut} < E_{\rm cut,ref}$ defining a ``coarse grid'' in real space, we compute two approximations of the three QoIs:
\begin{enumerate}
  \item $E_{\rm SCF}$, $\rho_{\rm SCF}$ and $F_{\rm SCF}$ denote the approximations obtained from the variational solution of the Kohn--Sham problem on the coarse grid;
  \item $E_{\rm Newton}$, $\rho_{\rm Newton}$ and $F_{\rm Newton}$ denote the ones computed from the Kohn--Sham state obtained by one Newton step on the fine grid, starting from the variational solution of the Kohn--Sham problem on the coarse grid: as the SCF is converged on the coarse grid, we perform the Newton step on the fine grid in order to improve the approximation of $P_*$. That is, if $P$ is the variational solution on the coarse grid, $P-((\bm{\Op}(P) + \bm{K}(P))|_{\Tc_P\Mc_\Nc})^{-1}R(P)$ is a much better approximation of $P_*$ (for the metrics adapted to the chosen three QoIs).
\end{enumerate}
The errors between these approximations and the reference values are
plotted in \autoref{fig:linearization} as functions of $E_{\rm
  cut}$. The errors on the ground-state density are measured with the
$\L^2_\#$ metric, while the errors on the forces are measured with the
Euclidean metric on $\Rb^{3 \times 2}$.

For the simple case of a silicon crystal at the LDA level of theory, the linearization works very well, even for
very small values of $\Ecut$'s of the order of $5$ Ha. Indeed the Kohn--Sham ground-state obtained by variational approximation on a coarse grid is significantly improved by one Newton step: the errors on the QoIs obtained with the latter are orders of magnitude smaller than the ones obtained with the former.

\begin{figure}[ !ht]
  \centering
  \includegraphics[width=0.32\linewidth]{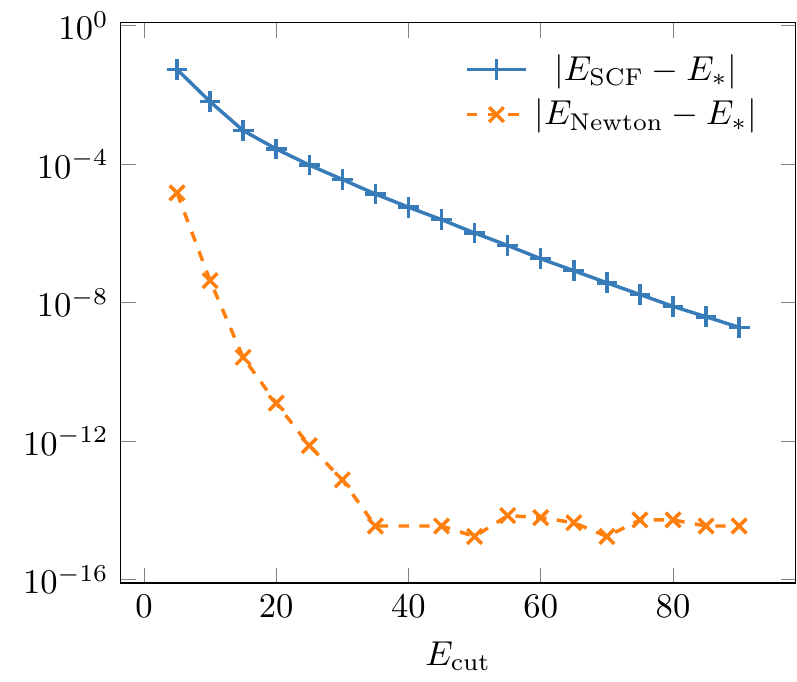}
  \hfill
  \includegraphics[width=0.32\linewidth]{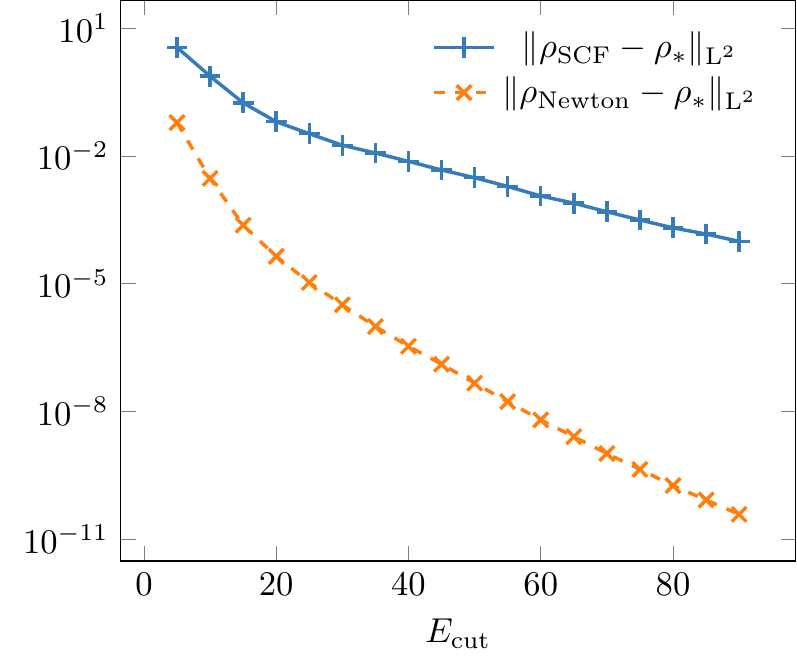}
  \hfill
  \includegraphics[width=0.32\linewidth]{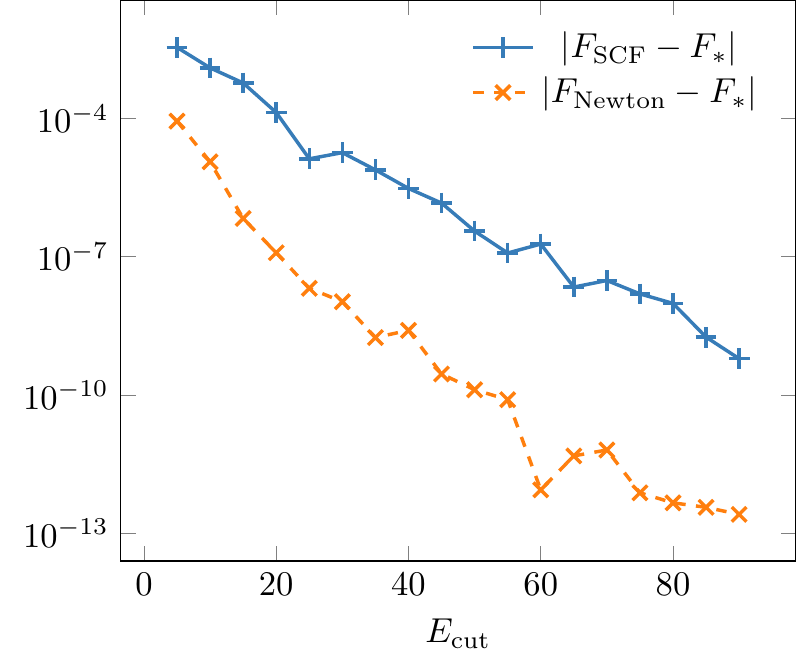}
  \caption{Errors for the ground-state energy (hartree), ground-state density and interatomic forces (hartree/bohr) for Si as a function of
    $\Ecut$, for both the variational solution of the Kohn--Sham problem on the coarse grid defined by $E_{\rm cut}$ (solid line) and the post-processed solution obtained with one Newton step on the fine grid (dashed line). This shows that the linearization approximation is excellent, even for energy cut-offs as low as $\Ecut=5$ Ha.}
  \label{fig:linearization}
\end{figure}

\subsection{A simple error bound based on operator norms}
\label{sec:simple_error_bound}

From \eqref{eq:linearize} one can extract an error bound:
\begin{equation}
    \label{eq:badbound}
    \begin{split}
      \norm{P-P_*} &\approx \norm{\bm{\Pi}_{P}(P-P_*)} \\ &\lq \norm{((\bm{\Op}(P) + \bm{K}(P))|_{\Tc_P\Mc_\Nc})^{-1}}_{\rm op} \norm{R(P)} \quad \mbox{(+ h.o.t.),}
    \end{split}
\end{equation}
where $\norm{\cdot}_{\rm op}$ is the (super-)operator norm associated with the chosen norm $\norm{\cdot}$ on $\Hc$.
This bound is not guaranteed, but the
results in \autoref{fig:linearization} suggest that it is very close
to be guaranteed. Guaranteeing this bound could be done, provided that one could
bound the higher-order terms rigorously
\cite{schmidtRigorousEffectiveAposteriori2020}; this is an interesting
prospect, but lies outside the scope of this paper.
To test the accuracy of this bound for a specific norm on $\Hc$,  we would need to estimate the corresponding operator
norm of the Hermitian operator $((\bm{\Op}(P) + \bm{K}(P))_{\Tc_P\Mc_\Nc})^{-1}$ for all the $P$'s we are considering.
In order to lower the computational burden, we consider instead the bound
\begin{align}
  \label{eq:badbound2}
  \norm{P-P_*} \lq \norm{((\bm{\Op}_* + \bm{K}_*)|_{\Tc_{P_*}\Mc_\Nc})^{-1}}_{\rm op} \norm{R(P)} \quad \mbox{(+ h.o.t.).}
\end{align}
This enables us to compute the operator norm $\norm{((\bm{\Op}_* + \bm{K}_*)|_{\Tc_{P_*}\Mc_\Nc})^{-1}}_{\rm op}$ only once, instead of computing it for every $P$. This is of course not accessible in practice, but we use it here for the sake of numerical experiment.
Moreover, we can consider that the bounds \eqref{eq:badbound} and \eqref{eq:badbound2} are almost equivalent since the results obtained in the previous section show that we are in the linear regime even for the lowest values of $E_{\rm cut}$ used in practice. The operator $(\bm{\Op}_* + \bm{K}_*)|_{\Tc_{P_*}\Mc_\Nc}$ is
Hermitian for the Frobenius inner product and, thus, the operator norm $\norm{((\bm{\Op}_* + \bm{K}_*)_{\Tc_{P_*}\Mc_\Nc})^{-1}}_{\rm op}$ corresponding to the Frobenius norm on $\Hc$ is equal to the inverse of the smallest eigenvalue of $(\bm{\Op}_* + \bm{K}_*)|_{\Tc_{P_*}\Mc_\Nc}$. Standard iterative eigenvalue solvers
for Hermitian operators can be used to compute this eigenvalue. We use here the LOBPCG algorithm
\cite{knyazevOptimalPreconditionedEigensolver2001}.

We can see on \autoref{fig:cs_bad} (left panel) that when choosing the Frobenius norm on $\Hc$, the bound \eqref{eq:badbound2} leads to very crude error estimates: the error is
overestimated by several orders of magnitude, and the bound becomes
worse and worse as $\Ecut$ increases. This issue is well-known in the analysis of partial differential
equations, where $\L^{2}$-type norms are not the natural ones to measure
the error on the solution or the residual. Instead, for the Kohn--Sham equations and other second-order elliptic problems, it is more relevant to measure the error $P-P_*$ in $\H^{1}$-type Sobolev norms (energy norms) and the residual $R(P)$ in $\H^{-1}$-type Sobolev norms (dual norms). The linear operator linking the two quantities (here  $(\bm \Op(P) + \bm K(P))|_{T_{P}\Mc_\Nc}$) is then expected to be a bounded isomorphism from the state error to the residual space for these norms. This suggests adapting the metrics on the tangent space $T_{P}\Mc_\Nc$ in which we measure the error $P-P_*$ (or more precisely the leading term $\bm{\Pi}_{P}(P-P_*)$) on the one hand, and the residual $R(P)$ on the other hand. Similar considerations
lead to the ``kinetic energy preconditioning'' used in practical
computations~\cite{payneIterativeMinimizationTechniques1992}.
Using the super-operator $\bm{M}$ on $\Tc_P\Mc_\Nc$ introduced in \eqref{eq:defMi}
with $T_i$ the diagonal operator on $\Cb^\Nc$ representing the operator $-\frac{1}{2}\Delta + t_i$ where $t_i=\frac 12 \norm{\nabla\phi_i}_{\L^2_\#}^2$ (kinetic energy of the $i^{\rm th}$ orbital), we obtain the bound
\begin{align}
  \label{eq:bound_P_M}
  \normF{\bm{M}^{1/2}\bm{\Pi}_{P}(P-P_*)} \!\! \lq \!\! \norm{\bm{M}^{1/2} ((\bm{\Op}(P) + \bm{K}(P))|_{\Tc_P\Mc_\Nc})^{-1} \bm{M}^{1/2}}_{\rm op} \normF{\bm M^{-1/2} R(P)}.
\end{align}

Here also, we lower the computational burden by replacing the first term in the RHS
%$\norm{\bm{M}^{1/2} ((\bm{\Op}(P) + \bm{K}(P))|_{\Tc_P\Mc_\Nc})^{-1} \bm{M}^{1/2}}_{\rm op}$
by the asympotically equal quantity $\norm{\bm{M}_*^{1/2} ((\bm{\Op}_* + \bm{K}_*)|_{\Tc_{P_*}\Mc_\Nc})^{-1} \bm{M}_*^{1/2}}_{\rm op}$.
The results are shown in \autoref{fig:cs_bad} (central panel). This time,
the curves are almost parallel: the gap does not widen as $\Ecut$
increases.
However, the bound is still an overestimate
by more than one order of magnitude. This is due to the fact that
\[
  \norm{\bm{M}_*^{1/2}((\bm{\Op}_* + \bm{K}_*)|_{\Tc_{P_*}\Mc_\Nc})^{-1}\bm{M}_*^{1/2}}_{\rm op} \approx
  14.85
\]
for this system, while the residual $R(P)$ is supported only on high-frequency Fourier modes, on which the operator
$\bm{M}^{1/2} ((\bm{\Op}(P) + \bm{K}(P))|_{\Tc_P\Mc_\Nc})^{-1} \bm{M}^{1/2}$ is close to
identity. The latter statement is supported by Proposition~\ref{prop:err_res_cvg} in the appendix (see also the result~\cite[Proposition 5.10]{cancesGuaranteedPosterioriBounds2020} concerning the linear setting). Thus, $\normF{\bm M^{-1/2} R(P)}$ is a good approximation
of $\normF{\bm{M}^{1/2}\bm{\Pi}_{P}(P-P_*)}$, as shown on
\autoref{fig:cs_bad} (central panel).

\begin{figure}[ !ht]
  \centering
  \includegraphics[width=0.3\linewidth]{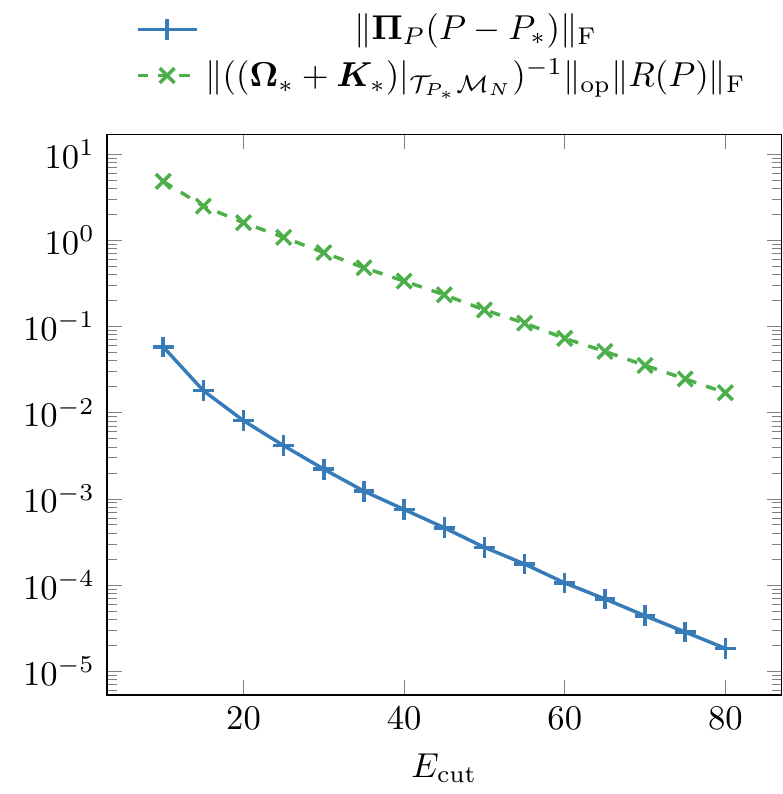}\hfill
  \includegraphics[width=0.3\linewidth]{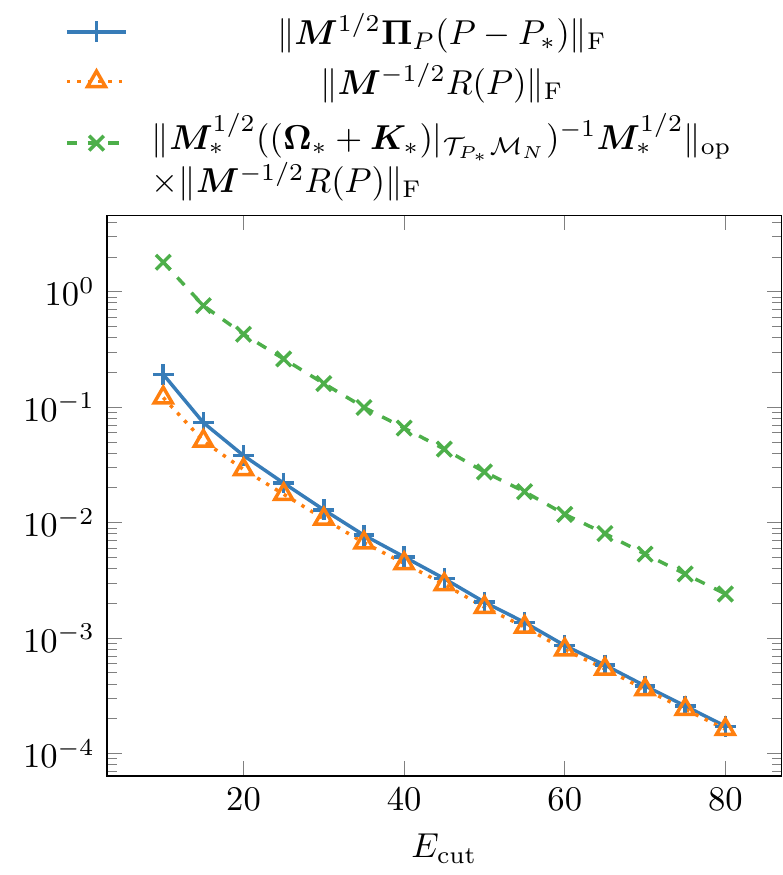}\hfill
  \includegraphics[width=0.3\linewidth]{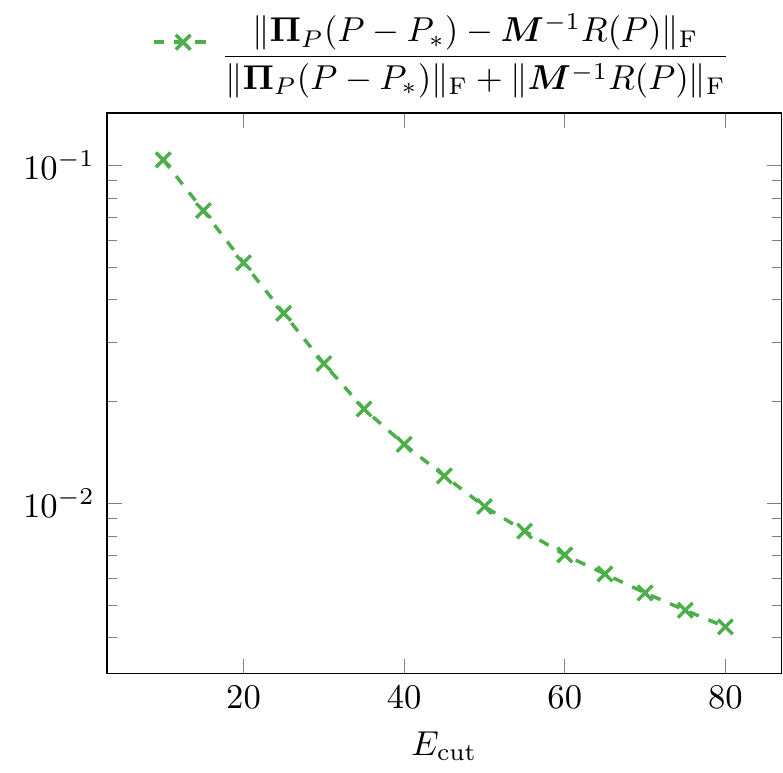}
  \caption{Error bounds for Si based on  \eqref{eq:badbound2} and \eqref{eq:bound_P_M}.
    Left: $\L^2$-norm; Center: $\H^1$-type norm; Right: relative error between $\bm{\Pi}_P(P-P*)$ and $\bm{M}^{-1}R(P)$. It holds
    $\norm{((\bm{\Op}_* + \bm{K}_*)|_{\Tc_{P_*}\Mc_\Nc})^{-1}}_{\rm op} \approx 11.23$
     and $\norm{\bm{M}_*^{1/2}((\bm{\Op}_* + \bm{K}_*)|_{\Tc_{P_*}\Mc_\Nc})^{-1}\bm{M}_*^{1/2}}_{\rm op} \approx 14.85$.  }
  \label{fig:cs_bad}
\end{figure}

\subsection{Error bounds on QoIs and applications to interatomic forces}\label{sec:estim_qoi}

Consider now a quantity of interest characterized by the smooth observable $A : \Mc_\Nc \to \Gc$, where $\Gc$ is a normed vector space (in particular, $\Gc=\Rb$ for real QoIs such as the ground state energy, $\Gc=\Rb^{3N_{\rm at}}$ for interatomic forces, and, e.g., $\Gc=\L^2_\#$ for the ground-state densities). For such a QoI, there holds for $P \in \Mc_\Nc$ in the vicinity of $P_*$,
\begin{equation}\label{eq:qoi_lin}
  A(P) - A_* = \d A(P) \cdot (\bm{\Pi}_{P}(P - P_*)) + {\rm h.o.t.},
\end{equation}
where $\d A(P) \in \Lc( \Tc_{P}\Mc_\Nc ; \Gc)$ is the derivative of $A$ at $P$. We thus obtain the bound
\begin{equation}\label{eq:qoi_bound}
  \norm{A(P) - A_*}_{\Gc} \lq \norm{\d A(P)}_{\Tc_P\Mc_\Nc \to \Gc} \norm{\bm{\Pi}_{P}(P-P_*)}_{\Tc_P\Mc_\Nc} \quad \mbox{(+ h.o.t.)}
\end{equation}
for given norms $\norm{\cdot}_\Gc$ and $\norm{\cdot}_{\Tc_P\Mc_\Nc}$ on~$\Gc$ and $\Tc_P\Mc_\Nc$ respectively, and associated operator norm $\norm{\cdot}_{\Tc_P\Mc_\Nc \to \Gc}$ on $\Lc( \Tc_{P}\Mc_\Nc ; \Gc)$.

Let us start with the simple case of the component of the force on atom $j$ along the direction $\alpha$ due to the local part of the pseudopotential. Since this QoI is scalar, we have $\Gc = \Rb$. Using \eqref{eq:def_Fj}, we get
\[
  F_{j,\alpha}^{\rm loc}(P) = -\Tr\prt{\frac{\partial V_{\rm loc}}{\partial X_{j,\alpha}}P}.
\]
Thus \eqref{eq:qoi_bound} becomes, using the Frobenius norm on $\Tc_{P}\Mc_\Nc$,
\begin{equation}\label{eq:fdir_bound}
  |F_{j,\alpha}^{\rm loc}(P) - F_{j,\alpha}^{\rm loc}(P_*)| \lq \normF{\bm{\Pi}_P\frac{\partial V_{\rm loc}}{\partial X_{j,\alpha}}} \normF{\bm{\Pi}_{P}(P-P_*)} \quad \mbox{(+ h.o.t.)},
\end{equation}
with
\begin{equation}\label{eq:fdir_diff}
  \bm{\Pi}_P\frac{\partial V_{\rm loc}}{\partial X_{j,\alpha}} \simeq_\Phi (1-\Phi\Phi^*)\prt{\frac{\partial V_{\rm loc}}{\partial X_{j,\alpha}}\Phi}.
\end{equation}
We plot in \autoref{fig:forces} (left panel) the bound
\eqref{eq:fdir_bound}.
The latter is pessimistic by more than three orders of magnitude, and its relative accuracy gets worse and worse as the cut-off energy increases.

\begin{figure}[ !ht]
  \hfill
  \includegraphics[width=0.4\linewidth]{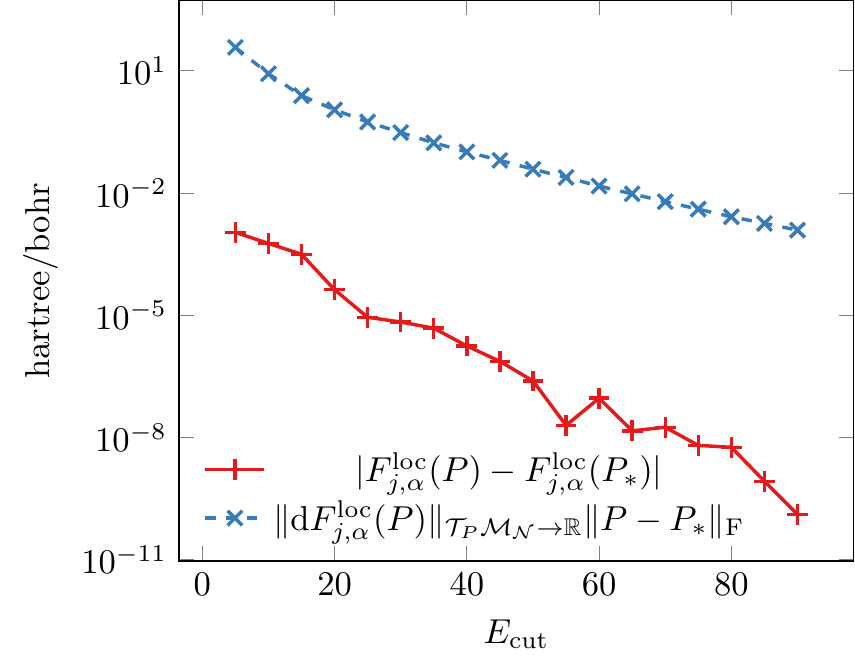}
  \hfill
  \includegraphics[width=0.4\linewidth]{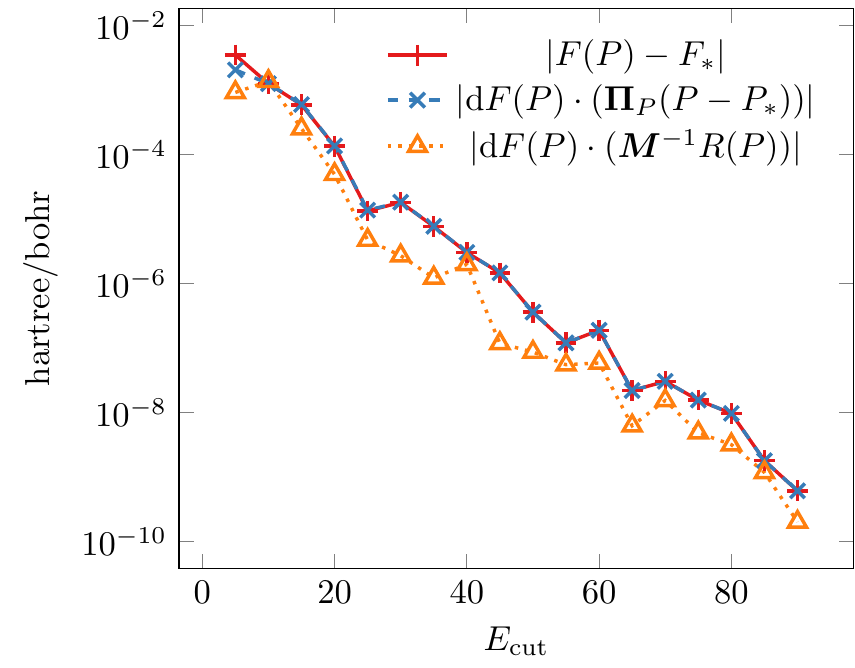}
  \hfill{~}
  \caption{Silicon. (Left panel) Inaccurate error bound \eqref{eq:fdir_bound} for the component of the force on atom $j =1$ along direction $\alpha = (1,0,0)$ due to the local part of the pseudopotential. (Right panel)
    Approximation of $\abs{F(P) - F_*}$ obtained by dropping the h.o.t. in the generic formula \eqref{eq:qoi_lin} and applying the derivative $\d F(P)$ either to the actual error $\bm{\Pi}_P(P-P_*)$ or
    the preconditioned residual $\bm{M}^{-1}R(P)$. The approximation $\d F(P)\cdot (\bm{\Pi}_P(P-P_*))$
    matches asymptotically the error  $F(P) - F_*$, validating again the rapid establishment
    of the linear regime. On the other hand, the approximation $\d F(P)\cdot(\bm{M}^{-1}R(P))$ does not match asymptotically.}
  \label{fig:forces}
\end{figure}

The bound \eqref{eq:fdir_bound} using the operator norm of $\d A(P)$ being very inaccurate, we tested another approach consisting in using directly \eqref{eq:qoi_lin} to evaluate the error on the QoI by applying the derivative $\d A(P)$ to a computable approximation of $\bm{\Pi}_P(P-P_*)$.
Relying on the results in the previous section showing that $\bm
M^{-1/2} R(P)$ is a good approximation of $\bm M^{1/2} \bm{\Pi}_P(P-P_*)$ in Frobenius norm,
it is tempting to replace $\bm{\Pi}_P(P-P_*)$ by $\bm M^{-1} R(P)$ in \eqref{eq:qoi_lin} and approximate $F(P) - F_*$ by $\d
F(P) \cdot (\bm M^{-1} R(P))$, this approximation being justified by \autoref{fig:cs_bad} (right panel). Indeed the continuous counterpart of the asymptotic equivalence between $\bm
M^{-1/2} R(P)$ and $\bm M^{1/2} \bm{\Pi}_P(P-P_*)$ for the Frobenius ($\L^2$-type) norm is that the preconditioned residual and the error on the density matrix are asymptotically equivalent in $\H^1$-type norms, while the derivative of the interatomic forces observable is continuous on $\H^1$-type spaces. This idea is tested in \autoref{fig:forces} (right panel). However, this leads to an underestimation of the error, although by a small factor.
The reason is that even if $P-P_*$ and $M^{-1} R(P)$ do match asymptotically for the suitable norms, this is not the case for $\d F(P) \cdot (\bm{\Pi}_P(P-P_*))$ and $\d F(P) \cdot (\bm M^{-1} R(P))$ for reasons made clear in the next section.

\begin{remark}
  In our simulations, the computation of $\d A(P) \cdot X$ for $X\in\Tc_P\Mc_\Nc$ is performed by forward-mode automatic
  differentiation using the \texttt{ForwardDiff.jl} Julia package~\cite{revelsForwardModeAutomaticDifferentiation2016}.
\end{remark}

We summarize the results of this section in \autoref{fig:forces_cs}, displaying the combination of these bounds: the successive operator norms result in very inaccurate bounds (from six to eleven orders of magnitude) for the error on the forces.

\begin{figure}[h!]
  \includegraphics[width=\linewidth]{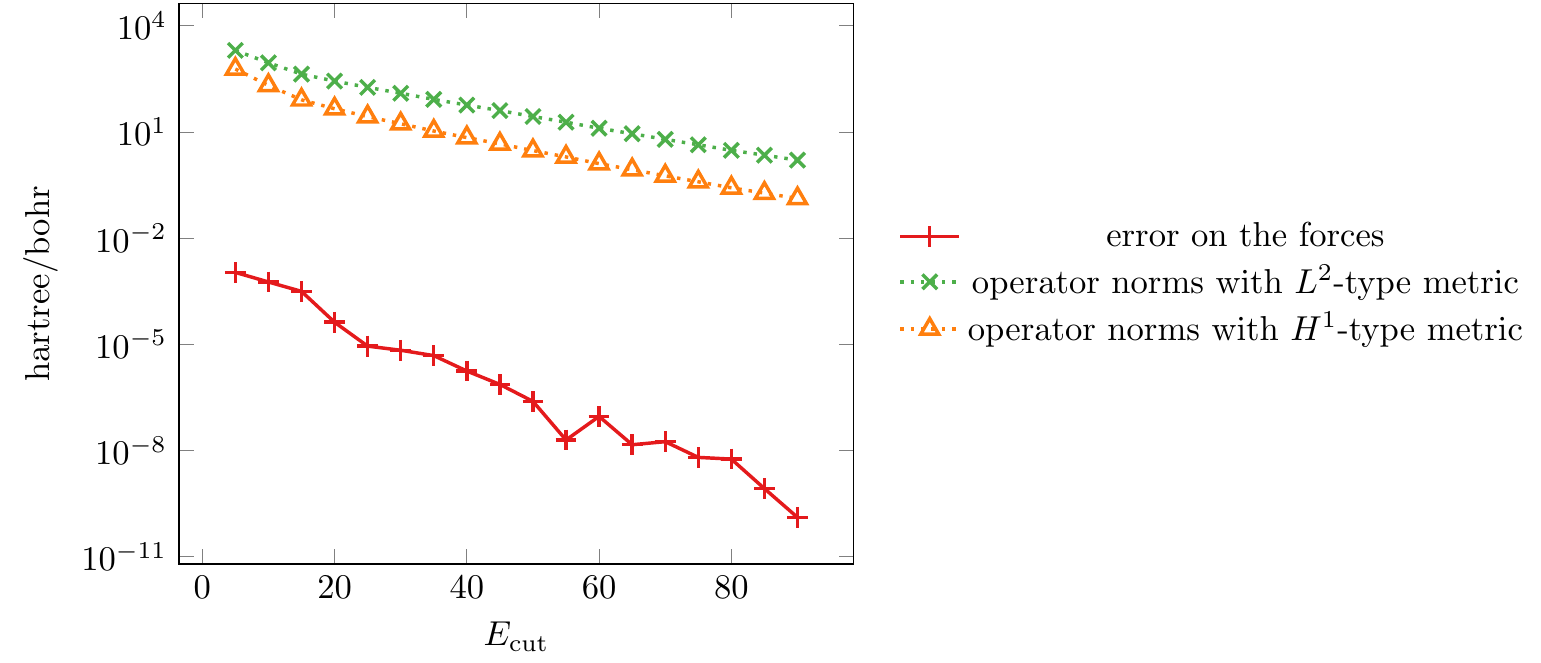}
  \caption{Combination of the error estimate \eqref{eq:qoi_bound} on the interatomic forces with
    the error estimate on the error in $L^2$-type norm \eqref{eq:badbound2} and $H^1$-type norm \eqref{eq:bound_P_M}. The inaccuracy of the bounds accumulates and results in
    extremely inaccurate bounds, from six to eleven orders of magnitude.}
  \label{fig:forces_cs}
\end{figure}

\section{Improved error bounds based on frequencies splitting}
\label{sec:sec4}

\subsection{Spectral decomposition of the error}
In the previous section, we saw that even if $\bm{\Pi}_P(P-P_*)$ and $\bm{M}^{-1}R(P)$
are asymptotically equivalent in suitable norms, replacing the former by the latter in \eqref{eq:qoi_lin} when $A=F$ (interatomic forces)
results in a large error, even in the asymptotic regime.

To analyze this issue, we use the decomposition
\begin{equation}\label{eq:decomp_X}
  \Xc_{E_{\rm cut,ref}}=\Xc_\Ecut \oplus \Xc_\Ecut^\perp.
\end{equation}
Since $\Xc_\Ecut= \Span(e_{G}, \; \frac{\abs{G}^2}2 \lq \Ecut)$ and  $\Xc_\Ecut^\perp= \Span(e_{G}, \; \Ecut < \frac{\abs{G}^2}2 \lq  E_{\rm cut,ref})$,
\eqref{eq:decomp_X} corresponds to a low \emph{vs} high frequency splitting.
Using the identification of  $\Xc_{E_{\rm cut,ref}} \equiv \Cb^\Nc$ introduced in \autoref{sec:discr}, \eqref{eq:decomp_X} boils down to decomposing $\Cb^\Nc$ as
\[
  \Cb^\Nc = \Xc \oplus \Xc^\perp \quad \mbox{with} \quad  \Xc  = \prt{\begin{array}{c} \Cb^{N_b} \\ 0_{\Cb^{\Nc-N_b}} \end{array} } \quad \mbox{and} \quad \Xc^\perp = \prt{\begin{array}{c} 0_{\Cb^{N_b}} \\ \Cb^{\Nc-N_b} \end{array} }.
\]

Let $\Phi \in \Cb^{\Nc \times \Nel}$ be such that $\Phi^*\Phi=I_\Nel$ and $P = \Phi\Phi^* \in \Mc_\Nc$.
Combining the identification $\Xc_{E_{\rm cut,ref}} \equiv \Cb^\Nc$ described above with the relation \eqref{eq:identification_tangent_plane} identifying a matrix $X$ of the tangent space $\Tc_{P}\Mc_\Nc$ with a collection $\deltaPhi=(\deltaphi_1|\cdots|\deltaphi_\Nel) \in \Cb^{\Nc \times \Nel}$ of orbital variations such that $\Phi^*\deltaPhi=0$, the decomposition \eqref{eq:decomp_X} induces a decomposition of the tangent space $\Tc_P\Mc_\Nc$ into two orthogonal subspaces $\bm \Pi_{\Ecut}\Tc_P\Mc_\Nc$ and $\bm \Pi_{\Ecut}^{\perp} \Tc_P\Mc_\Nc$ (for the Frobenius inner product):
\begin{align*}
  \bm \Pi_{\Ecut} \prt{ \sum_{i=1}^{\Nel} \ket{\phi_{i}}\bra{\deltaphi_{i}} + \ket{\deltaphi_{i}}\bra{\phi_{i}}   }
  &\coloneqq \sum_{i=1}^{\Nel} \ket{\phi_{i}}\bra{\Pi_{\Xc} \deltaphi_{i}} + \ket{\Pi_{\Xc} \deltaphi_{i}}\bra{\phi_{i}} , \\
  \bm \Pi_{\Ecut}^\perp \prt{ \sum_{i=1}^{\Nel} \ket{\phi_{i}}\bra{\deltaphi_{i}} + \ket{\deltaphi_{i}}\bra{\phi_{i}}   }
  &\coloneqq \sum_{i=1}^{\Nel} \ket{\phi_{i}}\bra{\Pi_\Xc^\perp \deltaphi_{i}} + \ket{\Pi_{\Xc}^\perp \deltaphi_{i}}\bra{\phi_{i}},
\end{align*}
where $\Pi_{\Xc}$ is the orthogonal projector on $\Xc$ (for the canonical inner product of $\Cb^\Nc$) and $\Pi_\Xc^\perp=1-\Pi_\Xc$. If $P$ solves the
minimization problem \eqref{eq:pb_Nb}, we infer from the first-order optimality conditions that the residual $R(P)$ is orthogonal to
$\bm \Pi_{\Ecut} \Tc_P\Mc_\Nc$, meaning that the vectors $r_{i}(P)$ such that
\[
  R(P) = \sum_{i=1}^{\Nel} \ket{\phi_{i}}\bra{r_{i}(P)} + \ket{r_{i}(P)}\bra{\phi_{i}}
\]
belong to $\Xc^\perp$. Note that in practice, this is not exactly true for the full Kohn--Sham model because of the numerical quadrature errors involved in the treatment of the exchange-correlation terms.

Now $P-P_{*} \approx ((\bm{\Op}(P) + \bm{K}(P))|_{\Tc_P\Mc_\Nc})^{-1} R(P)$ contains two
components: one in $\bm \Pi_{\Ecut}\Tc_P\Mc_\Nc$ and one in $\bm \Pi_{\Ecut}^{\perp}\Tc_P\Mc_\Nc$. In the high-frequency subspace $\bm \Pi_{\Ecut}^\perp \Tc_P\Mc_\Nc$, the leading term in $(\bm{\Op}(P) + \bm{K}(P))|_{\Tc_P\Mc_\Nc}$ comes
from the contribution of the Laplacian arising in the Hamiltonian $h_0$, which is well approximated by the super-operator $\bm M$. This claim is supported by Proposition~\ref{prop:err_res_cvg}, in which we prove in a simplified setting that
$((\bm{\Op}(P) + \bm{K}(P))|_{\Tc_P\Mc_\Nc})^{-1} \bm \Pi_{\Ecut}^{\perp}$ is
asymptotically equivalent to $\bm M^{-1} \bm \Pi_{\Ecut}^{\perp}$.

This is what we observe in \autoref{fig:carot_1} (central and right
panels): if $P$ is the solution to \eqref{eq:pb_Nb}, the residual $R(P)$ is supported in $\bm \Pi_{\Ecut}^\perp \Tc_P\Mc_\Nc$ (up to numerical quadrature errors). In accordance with Proposition~\ref{prop:err_res_cvg}, the
difference between the error $P-P_* \approx \bm{\Pi}_P(P-P_*)$ and the preconditioned residual ${\bm M}^{-1}R(P)$
(\autoref{fig:carot_diff_res}) is smaller in Frobenius norm than the preconditioned
residual itself. This
explains our observations in \autoref{sec:simple_error_bound} that
$\normF{P-P_{*}}$ is well approximated by $\normF{\bm M^{-1} R(P)}$. However,
this does not imply that $\d F(P) \cdot \bm{\Pi}_P(P - P_*)$ is well-approximated
by $\d F(P) \cdot ({\bm M}^{-1} R(P))$. This is because the gradients $\nabla F_{j,\alpha}(P)$ are mostly supported on
low frequencies, as illustrated in \autoref{fig:carot_1}
(left panel). Although the low-frequency contribution to the error $\bm{\Pi}_P(P - P_*)$ is of
smaller magnitude than the high-frequency contribution, its contribution to
$\d F(P) \cdot (\bm{\Pi}_P(P - P_*))$ is very significant. The fact that the low-frequency error is not captured at
all by the purely high-frequency term $\bm M^{-1} R(P)$ is responsible for the poor approximation of
the error $F(P)-F_*$ by $\d F(P) \cdot ({\bm M}^{-1} R(P))$.

\begin{figure}[ !ht]
  \includegraphics[width=0.33\linewidth]{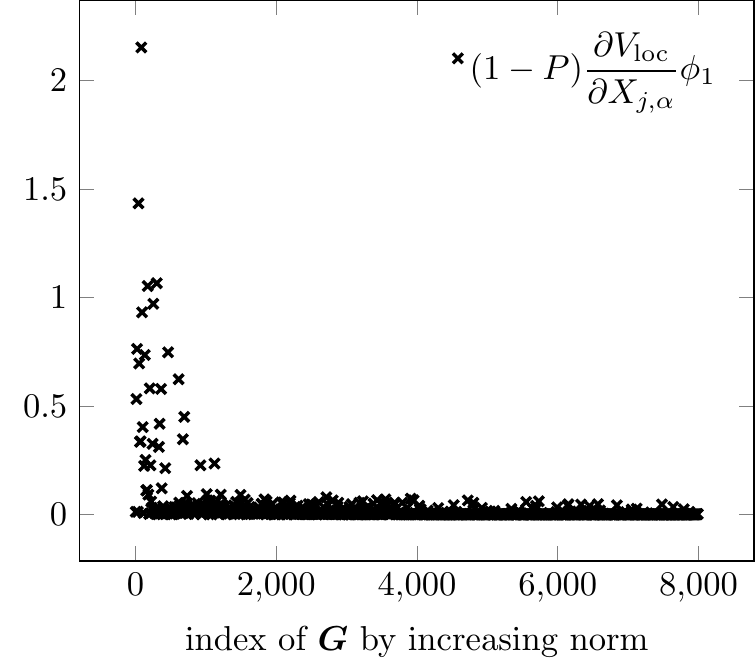}\hfill
  \includegraphics[width=0.33\linewidth]{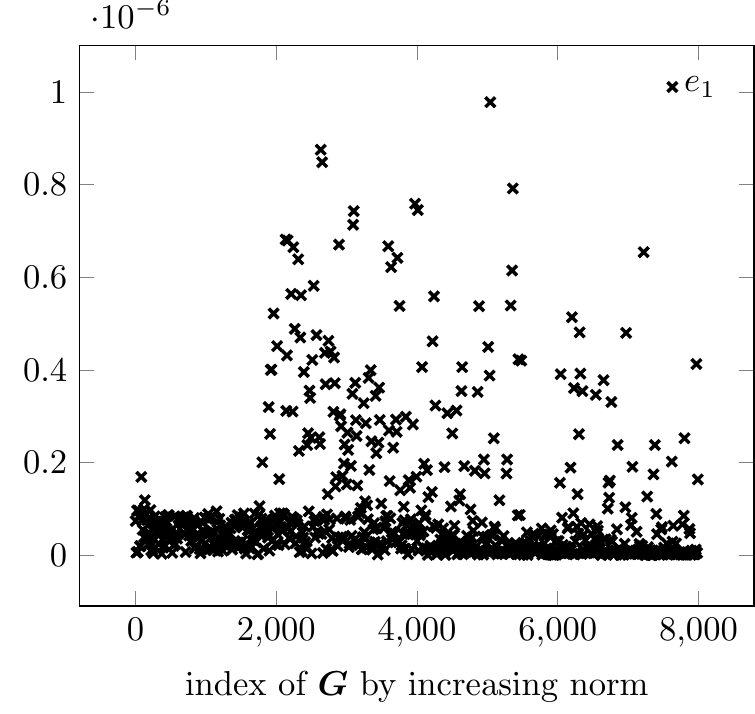}\hfill
  \includegraphics[width=0.33\linewidth]{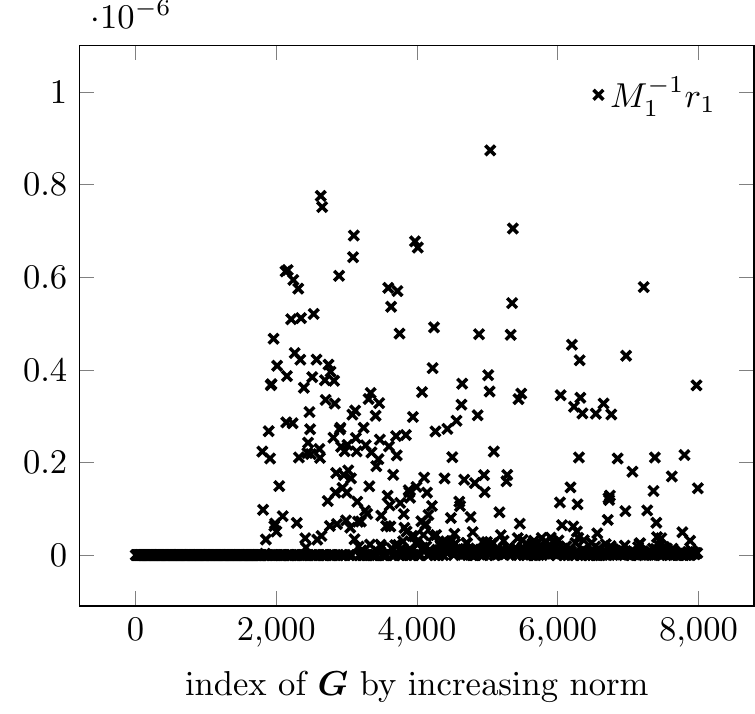}\hfill
  \caption{
    Fourier coefficients moduli in the orbital representation $\bm{\Pi}_P(P-P_*) \simeq_\Phi (e_i)_{1\lq i \lq N}$ and $\bm{M}^{-1}R(P) \simeq_\Phi (M_i^{-1}r_i)_{1\lq i \lq N}$.
    (Left) Test function $(1-P)\frac{\partial V_\text{loc}}{\partial X_{j,\alpha}}\phi_1$ (see \eqref{eq:fdir_diff}).
 }  \label{fig:carot_1}
\end{figure}
\begin{figure}[ !ht]
  \centering
  \includegraphics[width=0.33\linewidth]{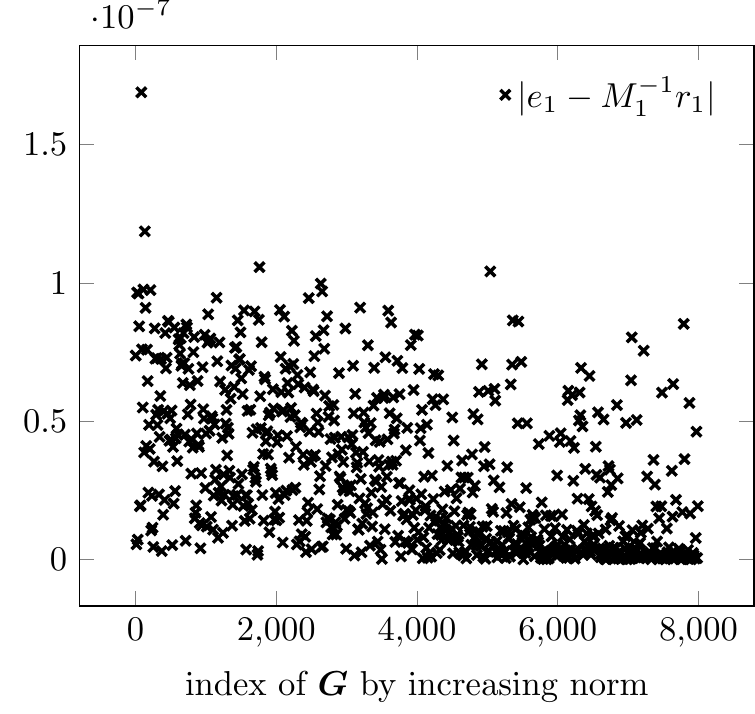}
  \caption{Fourier coefficients of the difference between the error $e_1$ and the preconditioned residual $M^{-1}_1r_1$, where $\bm{\Pi}_P(P-P_*) \simeq_\Phi (e_i)_{1\lq i \lq N}$ and $\bm{M}^{-1}R(P) \simeq_\Phi (M_i^{-1}r_i)_{1\lq i \lq N}$. Low frequencies contribute greatly.}
  \label{fig:carot_diff_res}
\end{figure}

Now that we have understood the reason why it is not possible to approximate the error $F(P) - F_*$ on the interatomic forces by the computable term $\d F(P) \cdot (\bm M^{-1} R(P))$, we propose in the next section a way to evaluate this error, based on the linearization \eqref{eq:linearize} and the frequencies splitting we just introduced.

\subsection{Improving the error estimation}

We now decompose tangent vectors and operators according to the splitting $\bm \Pi_{\Ecut}\Tc_P\Mc_\Nc$ and
$\bm \Pi_{\Ecut}^{\perp} \Tc_P\Mc_\Nc$, which we respectively label by 1 and 2 for
simplicity. In this way, the error-residual relationship can be
written in concise form with obvious notation as
\begin{align*}
  \begin{bmatrix}
    \prt{\bm \Op + \bm K}_{11} & \prt{\bm \Op + \bm K}_{12} \\
    \prt{\bm \Op + \bm K}_{21} & \prt{\bm \Op + \bm K}_{22}
  \end{bmatrix}
  \begin{bmatrix}
    P_{1} - P_{*1}\\
    P_{2}-P_{*2}
  \end{bmatrix}
  =
  \begin{bmatrix}
    R_{1}\\R_{2}
  \end{bmatrix}.
\end{align*}

Recall that $(\bm \Op(P) + \bm K(P))|_{\Tc_P\Mc_\Nc}$ is only invertible at
high cost as it has a priori nonzero values on the four components of the
operator arising from the low frequencies/high frequencies splitting of the
operator. The computational cost for the inversion is equivalent to performing
a Newton step on the reference grid.
% is defined on the reference grid
% (this is equivalent to performing a Newton
% step on the reference grid).
But we can make approximations to invert
it only on the coarse grid $\Xc_\Ecut$ and approximate the low
frequency error components. In the same spirit as for the
perturbation-theory based post-processing method introduced in
\cite{cancesPostprocessingPlanewaveApproximation2020,dussonPostprocessingPlanewaveApproximation2020} and the Feshbach-Schur method analyzed in~\cite{dussonAnalysisFeshbachSchurMethod2020},
we make the following approximations:
\[
  \prt{\bm \Op + \bm K}_{21} \approx 0 \quad \mbox{and} \quad \prt{\bm \Op + \bm K}_{22} \approx \bm M_{22},
\]
which yields
\begin{align*}
  \begin{bmatrix}
    \prt{\bm \Op + \bm K}_{11} & \prt{\bm \Op + \bm K}_{12} \\
    0 & \bm M_{22}
  \end{bmatrix}
  \begin{bmatrix}
    P_{1} - P_{*1}\\
    P_{2}-P_{*2}
  \end{bmatrix}
  =
  \begin{bmatrix}
    R_{1}\\R_{2}
  \end{bmatrix}
\end{align*}
and therefore
\begin{align}
  \label{eq:schur_1}
  P_{2} - P_{*2} &\approx \bm M_{22}^{-1} R_{2},\\
  \label{eq:schur_2}
  P_{1}-P_{*1} &\approx \prt{\bm \Op + \bm K}_{11}^{-1} (R_{1} - \prt{\bm \Op + \bm K}_{12} \bm M_{22}^{-1} R_{2}).
\end{align}
This requires only a single inexpensive computation on the fine grid.
The main bottleneck is then to solve a linear system with
operator $\prt{\bm \Op + \bm K}_{11}$, which is as expensive as a
full Newton step on the coarse grid $\Xc_\Ecut$. Since $R_{1} = 0$ when
$P$ is the optimal Galerkin solution on $\Xc_\Ecut$, we can understand the
previous attempt to replace $P-P_{*}$ by $\bm M^{-1} R(P)$ as
\eqref{eq:schur_1}. Not neglecting $\prt{\bm \Op + \bm K}_{12}$
in \eqref{eq:schur_2} gives rise to a correction on the coarse space also. We denote by $R_{\rm Schur}(P)$ the new residual
\[
  R_{\rm Schur}(P) =   \begin{bmatrix}
    \prt{\bm \Op + \bm K}_{11}^{-1} (R_{1} - \prt{\bm \Op + \bm K}_{12} \bm M_{22}^{-1} R_{2}) \\ \bm M_{22}^{-1} R_{2}
  \end{bmatrix}.
\]

To illustrate the validity of these approximations, we plotted in
\autoref{fig:carot_2} the components of $r_\text{Schur}$, the orbital representation of $R_\text{Schur}$.
We see that this time, the error is well approximated by \eqref{eq:schur_2}
in the low-frequency space.

\begin{figure}[ !ht]
  \hfill
  \includegraphics[width=0.33\linewidth]{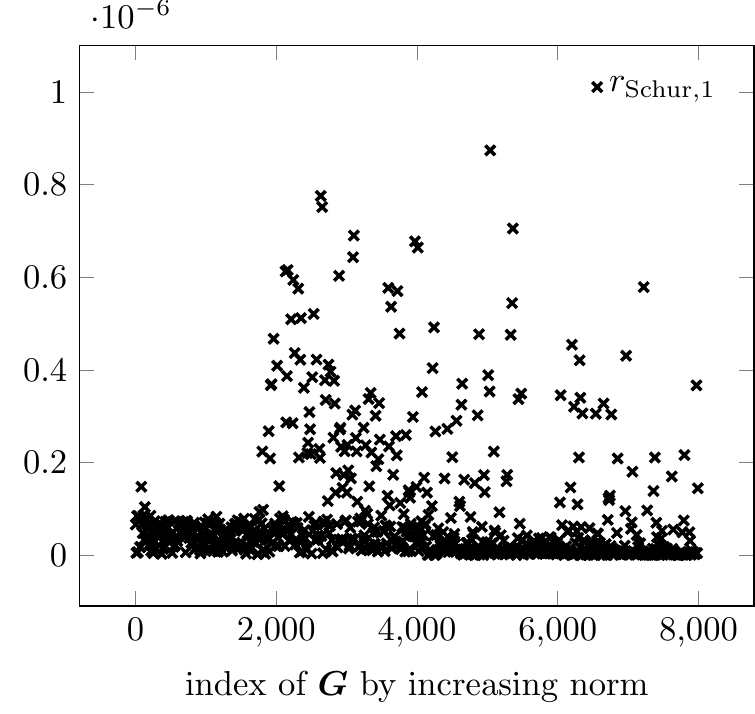}\hfill
  \includegraphics[width=0.33\linewidth]{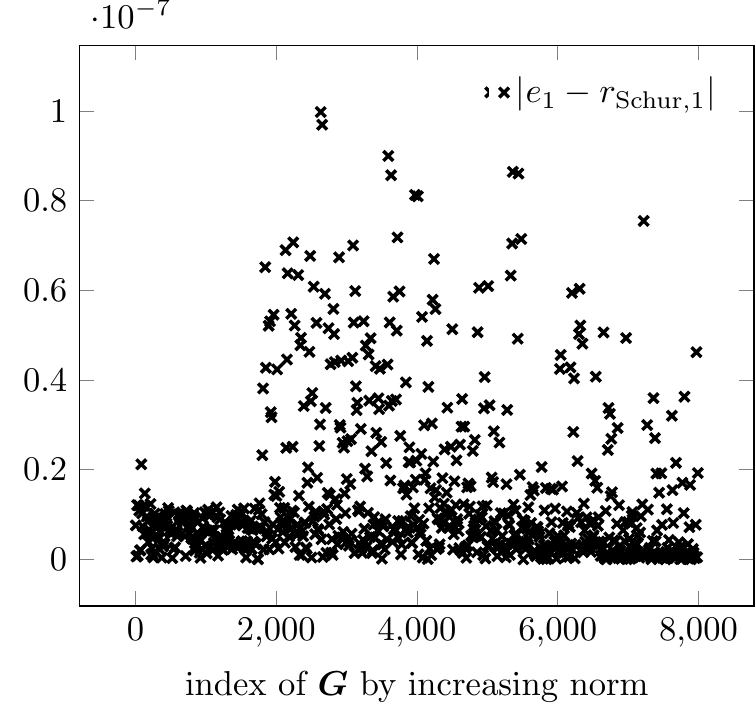}\hfill{~}
  \caption{Fourier coefficients of the new residual $r_{{\rm Schur},1}$ and its comparison to the error $e_1$, where $\bm{\Pi}_P(P-P_*) \simeq_\Phi (e_i)_{1\lq i \lq N}$ and $R_{\rm Schur}(P) \simeq_\Phi (r_{{\rm Schur},i})_{1\lq i \lq N}$.
    (Left) Components of the modified residual $r_{\text{Schur},1}$.
    (Right) Difference between the error and the new residual: low frequencies are better approximated (compare with \autoref{fig:carot_diff_res}).}
  \label{fig:carot_2}
\end{figure}

In \autoref{fig:forces_estimator}, we plot the new estimate $\d F(P) \cdot (R_\text{Schur}(P))$ of the error $F(P) - F_*$ as well as the differences
\begin{align*}
  F_\text{err} - F_* &\coloneqq F(P) - \d F(P) \cdot (\bm{\Pi}_P(P-P_*)) - F_*, \\
  F_\text{res} - F_* &\coloneqq F(P) - \d F(P) \cdot (\bm{M}^{-1}R(P)) - F_*, \\
  F_\text{Schur} - F_* &\coloneqq F(P) - \d F(P) \cdot (R_\text{Schur}(P)) - F_*,
\end{align*}
in order to have a better estimation of the improvement on the estimation of the error. With the Schur
complement method, the new estimate better matches the error than the crude one simply using the residual: the accuracy of the estimation is approximately improved by one order of magnitude.

\begin{figure}[ !ht]
  \centering
  \includegraphics[height=0.3\linewidth]{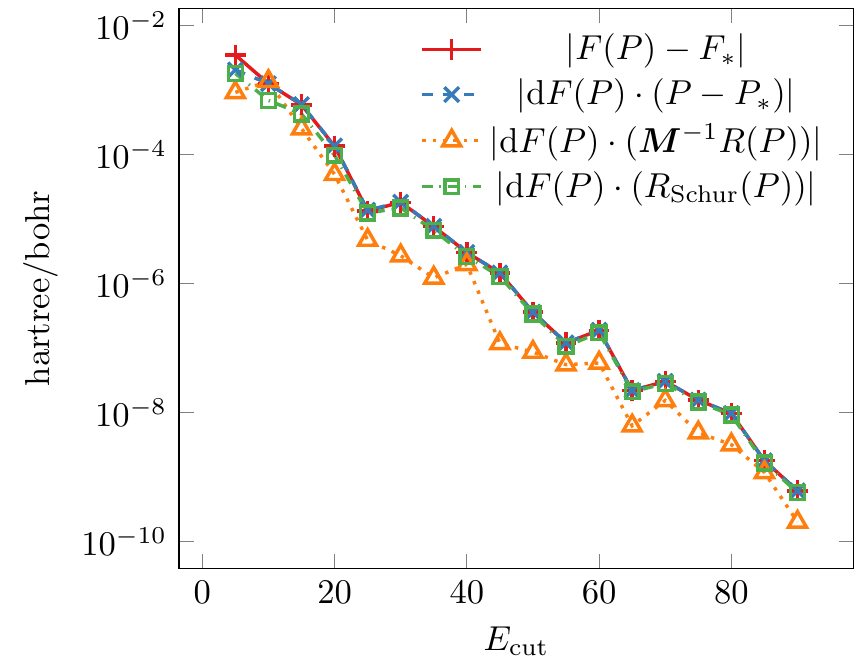}\hspace{2cm}
  \includegraphics[height=0.3\linewidth]{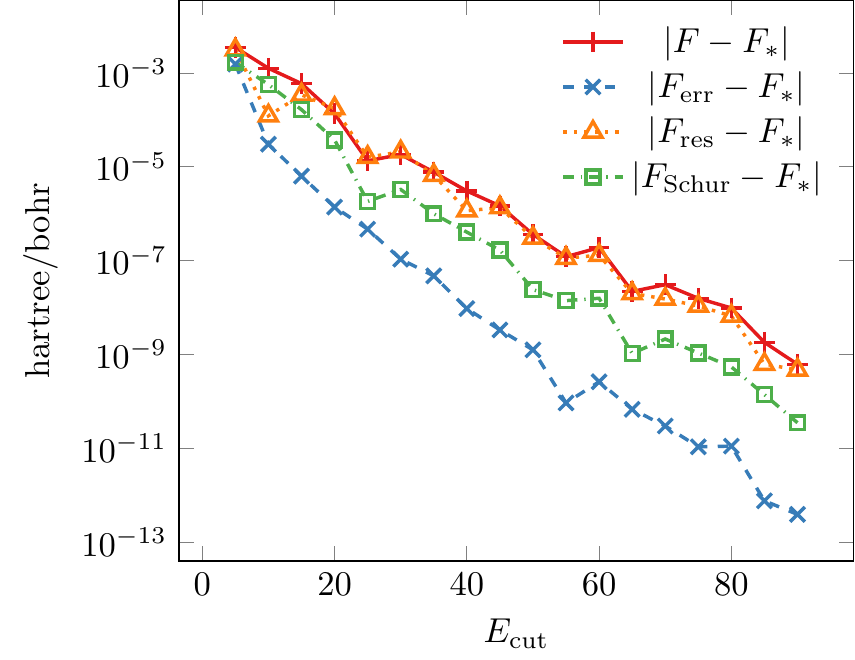}
  \caption{(Left) Estimation of the error $F(P) - F_*$ with $\d F(P)\cdot X$ where $X$ is either the exact error $\bm{\Pi}_P(P-P_*)$, the preconditioned residual $\bm{M}^{-1}R(P)$ or the modified residual $R_\text{Schur}(P)$. (Right) Enhancement of the estimation of the forces by replacing $F(P)$ with $F(P) - \d F(P)\cdot X$ where $X$ is either the exact error $\bm{\Pi}_P(P-P_*)$, the preconditioned residual $\bm{M}^{-1}R(P)$ or the modified residual $R_\text{Schur}(P)$.}
  \label{fig:forces_estimator}
\end{figure}

\begin{remark}
  The quantity $\d F(P) \cdot (R_\text{Schur}(P))$ does not yield a guaranteed
  estimator of the error on the forces as it is obtained after several
  approximations and is only valid in the asymptotic regime.
  However, it can be computed for a cost comparable to the
  one of performing a SCF step on the same grid and can be used for two main
  purposes:
  \begin{itemize}
    \item as an error bound, as the error $F(P)-F_*$ is
      reasonably well approximated by $\d F(P)\cdot(R_\text{Schur}(P))$;
    \item as a more precise approximation of the QoI, as the forces $F_j(P)$ on atom $j$ obtained by a variational approximation on a coarse grid are improved by the post-processing $F_j(P) \mapsto F_j(P) - \d F_j(P)\cdot(R_\text{Schur}(P))$.
  \end{itemize}
\end{remark}

\section{Numerical examples with more complex systems}\label{sec:experiments}

We perform the same simulations as for
silicon, but for more complex systems, namely \ch{GaAs} and \ch{TiO2}. The calculations are still performed within the LDA approximation with GTH pseudopotentials and Teter 93 exchange-correlation functional, with a $2 \times 2 \times 2$ $k$-point grid to discretize the Brillouin zone, and the reference solutions are obtained for $E_{\rm cut,ref} = 125$ Ha. We describe here the numerical setting for both systems.
\begin{description}
  \item[\ch{GaAs}] We use the usual periodic lattice for the FCC phase of \ch{GaAs}, with lattice constant $10.68$ bohrs, close to but not exactly at the equilibrium configuration in order to get nonzero forces. The \ch{Ga} atom is placed at fractional coordinates $(\frac 1 8, \frac 1 8, \frac 1 8)$ and the \ch{As} atom at fractional coordinates $(-\frac 1 8, -\frac 1 8, -\frac 1 8)$. The $\ch{Ga}$ atom is then displaced by $\frac 1 {15} (0.24, -0.33, 0.12)$ to get nonzero forces. In this setting, the reference values for the energy is $E_* = -8.572 $ Ha and the interatomic forces are, in hartree/bohr,
    \[
    F_* = \begin{bmatrix}
    -0.0448 &   0.0448 \\
    0.0722  & -0.0722 \\
    -0.0251 &  0.0251 \\
    \end{bmatrix},
    \]
    where the first column are the forces acting on the $\ch{Ga}$ atom in each direction, and the second column are the forces acting on the $\ch{As}$ atom.

  \item[\ch{TiO2}] We use the MP-2657 configuration in the primitive cell from the Materials Project~\cite{perssonMaterialsDataTiO22014}. We apply the small displacement $\frac 1 5 (0.22, -0.28, 0.35)$ to the equilibrium position of the first \ch{Ti} atom to get nonzero forces. In this setting, the reference values for the energy is $E_* = -71.589 $ Ha and the interatomic forces are, in hartree/bohr,
    \[\footnotesize
    F_* = \begin{bmatrix}
 -2.88 &  0.641 &  3.80 &  0.753 & -1.57 & -0.745 \\
  3.10 & -0.919 & -3.09 & -1.45 &  0.800 &  1.56 \\
  0.136 &  0.403 & -0.368 & -0.786 &  0.251 &  0.364 \\
    \end{bmatrix},
    \]
    where the first two columns are the forces acting on the two $\ch{Ti}$ atoms in each direction, and the other columns are the forces acting on the four $\ch{O}$ atoms.
\end{description}

\medskip

We plot in \autoref{fig:linearization_all} the energy, density and forces obtained after a Newton step on the fine grid starting from the variational solution on the coarse grid given by $\Ecut$, for \ch{GaAs} and \ch{TiO2}. The fast establishment of the asymptotic regime is confirmed for the two new systems as, even for small $\Ecut$'s, the so-obtained QoIs are orders of magnitude more accurate than the ones obtained by the variational solution on the coarse grid.

\begin{figure}[h!]
  \centering
  \begin{tabular}{ccc}
    & \ch{GaAs} & \\
    \includegraphics[width=0.30\linewidth]{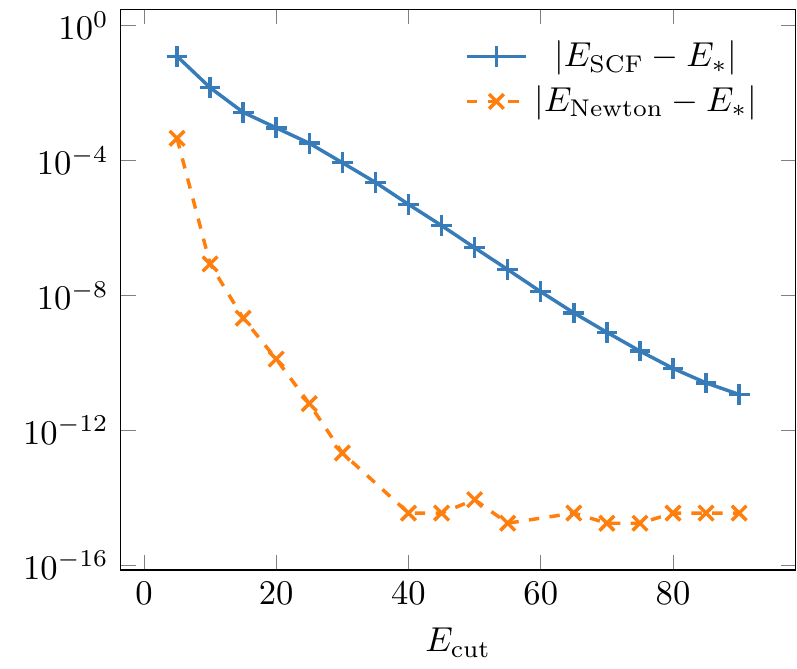}
    &
    \includegraphics[width=0.30\linewidth]{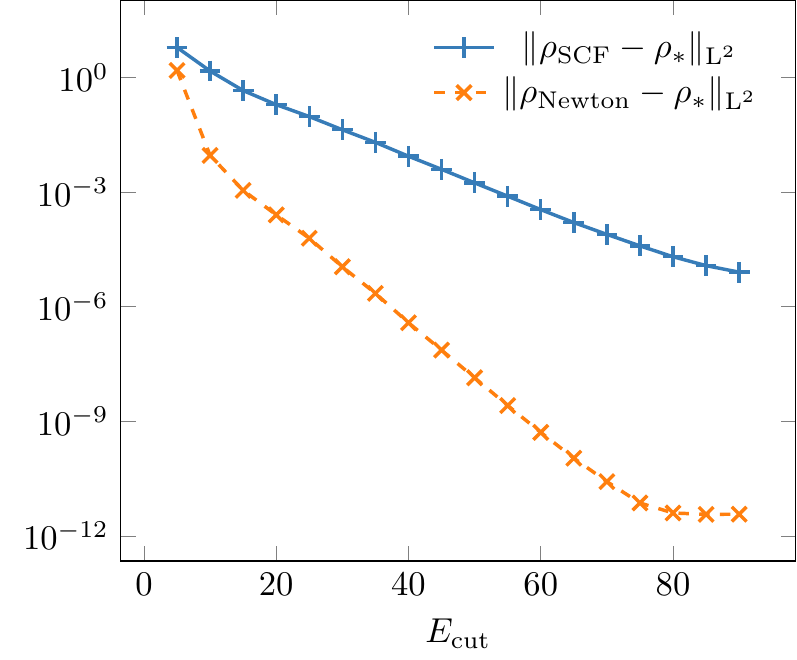}
    &
    \includegraphics[width=0.30\linewidth]{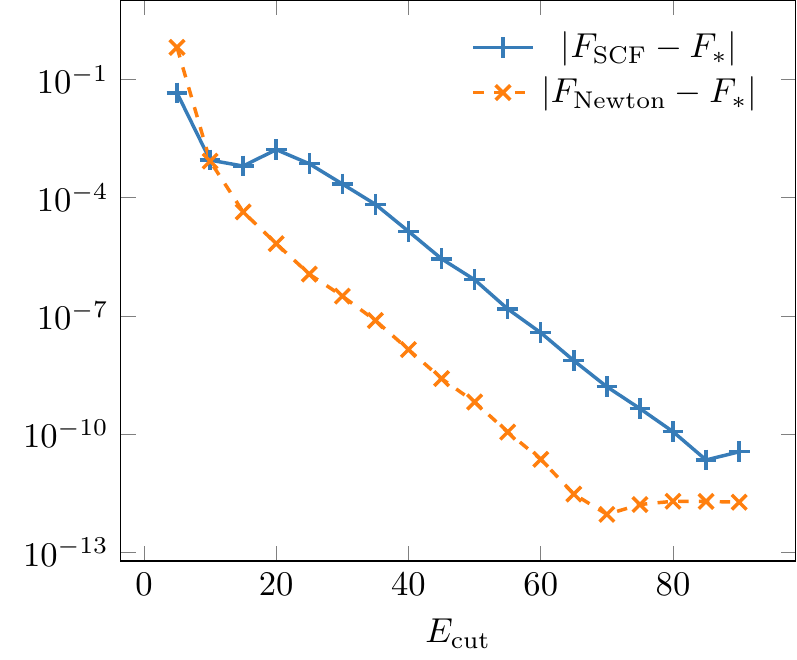} \\ \\
    & \ch{TiO2} & \\
    \includegraphics[width=0.30\linewidth]{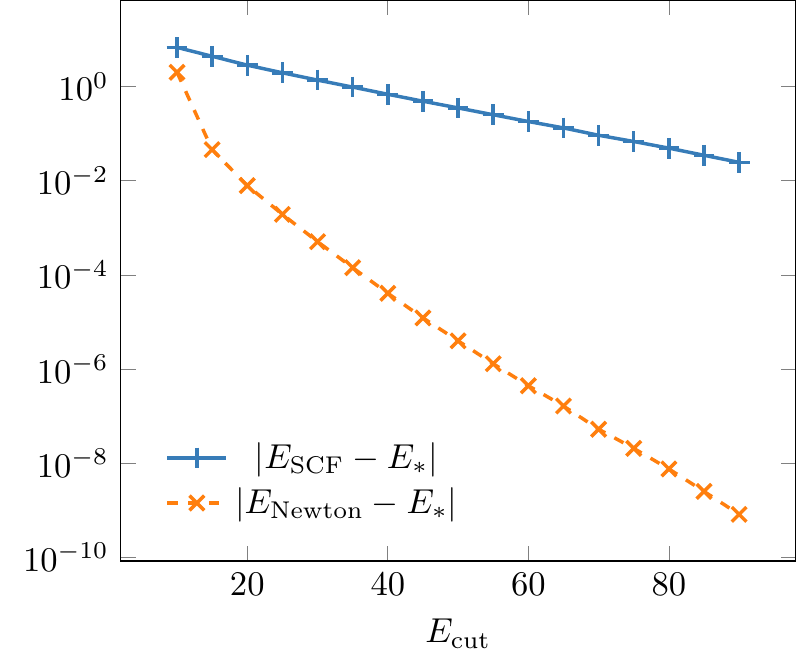}
    &
    \includegraphics[width=0.30\linewidth]{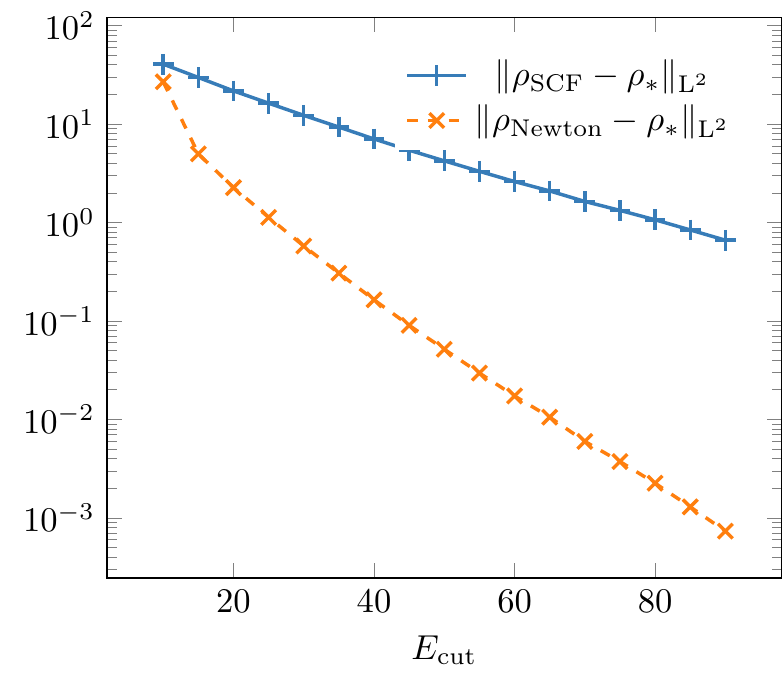}
    &
    \includegraphics[width=0.30\linewidth]{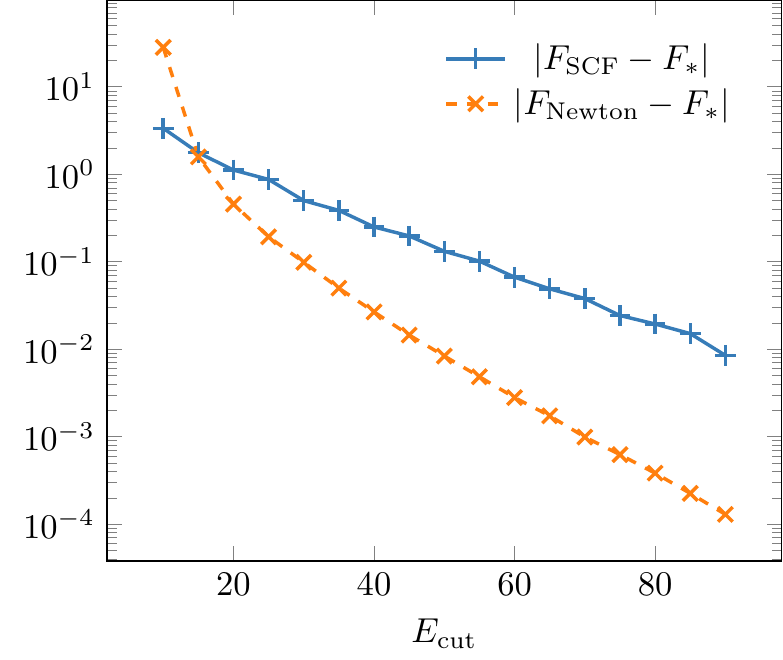} \\
  \end{tabular}
  \caption{
    Errors of some QoI as functions of
    $\Ecut$ (reference solution is obtained with $E_{\rm cut,ref}=125$ Ha) for \ch{GaAS} and \ch{TiO2}.
    Solid lines: errors obtained with the variational solution in the space $\Xc_{\Ecut}$.
    Dashed lines: errors obtained with one Newton step on the reference grid, starting from the variational solution in the space $\Xc_{\Ecut}$.
    Left panel: energy (hartree), central panel: discrete $\L^2$ norm of the density, right panel: interatomic forces (hartree/bohr). To be compared with \autoref{fig:linearization}.}
  \label{fig:linearization_all}
\end{figure}

\medskip

We plot in \autoref{fig:forces_estimator_all} the estimation of the actual error ${F(P) - F_*}$ with $\d F(P)\cdot X$ where $X$ is either $\bm{\Pi}_P(P-P_*)$, $R(P)$ or $R_{\rm Schur}(P)$.
In \autoref{fig:forces_differences_all}, we plot the improvement of the estimation of the forces $F(P) - \d F(P)\cdot X$ where $X$ is either $\bm{\Pi}_P(P-P_*)$, $R(P)$ or $R_{\rm Schur}(P)$. Just as for silicon, the estimation is well improved with the modified residual $R_{\rm Schur}$. Note that in the \ch{GaAs} case, there is a plateau for high $\Ecut$'s. This phenomenon is explained in the remark below.

\begin{figure}[h!]
  \centering
  \begin{tabular}{cc}
    \ch{GaAs} & \ch{TiO2} \\ \\
    \includegraphics[height=0.345\linewidth]{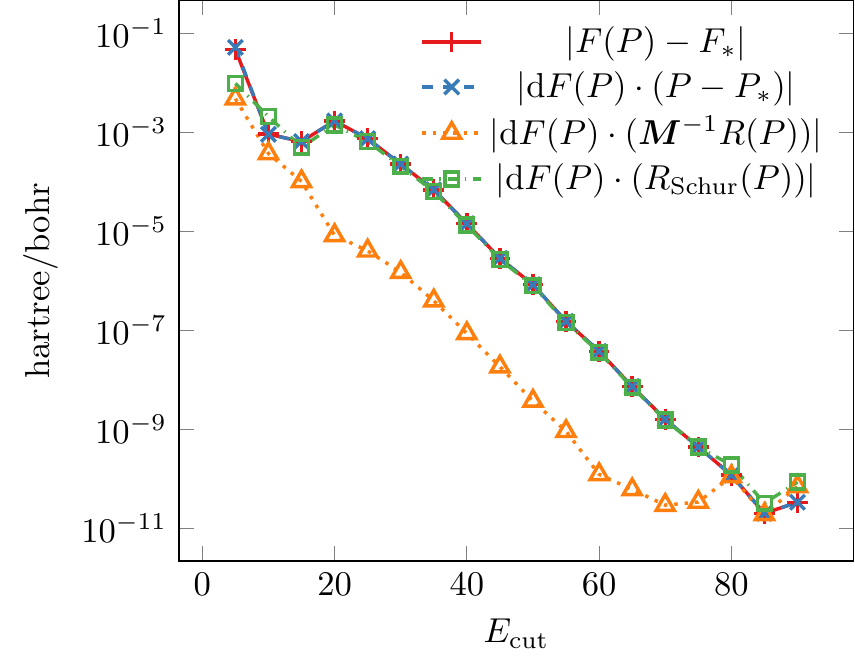} &
    \includegraphics[height=0.35\linewidth]{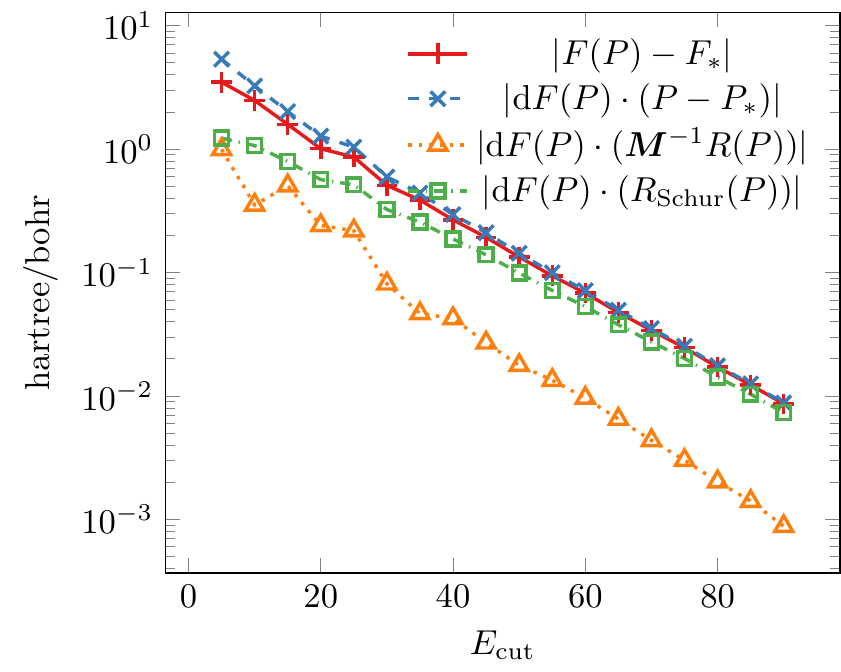}
  \end{tabular}
  \caption{Estimation of the error $F(P) - F_*$ with $\d F(P)\cdot X$ where $X$ is either the exact error $\bm{\Pi}_P(P-P_*)$, the preconditioned residual $\bm{M}^{-1}R(P)$ or the modified residual $R_\text{Schur}(P)$.  To be compared with \autoref{fig:forces_estimator} (Left).}
  \label{fig:forces_estimator_all}
\end{figure}

\begin{figure}[h!]
  \centering
  \begin{tabular}{cc}
    \ch{GaAs} & \ch{TiO2} \\ \\
    \includegraphics[height=0.34\linewidth]{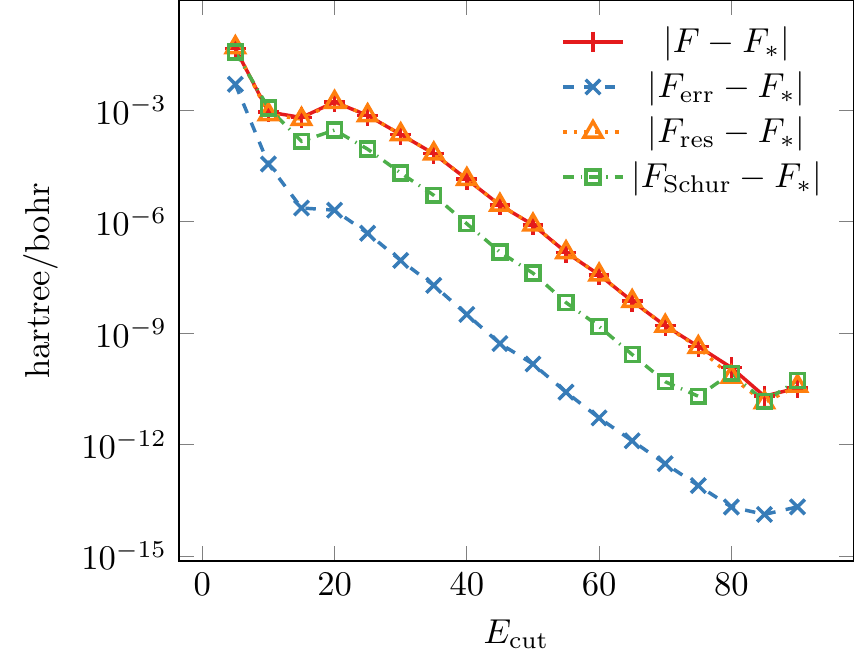} &
    \includegraphics[height=0.35\linewidth]{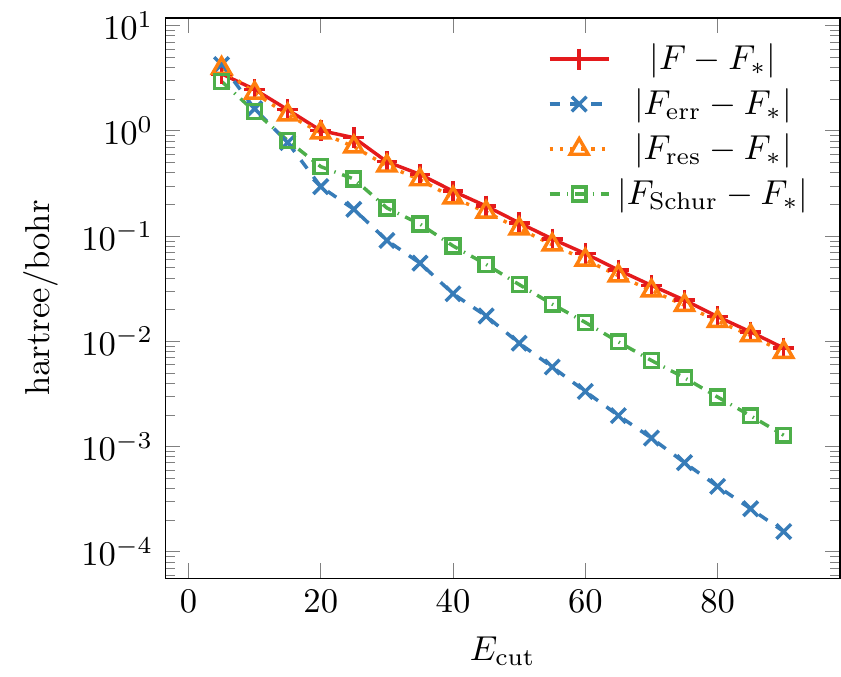}
  \end{tabular}
  \caption{Enhancement of the estimation of the forces by replacing $F(P)$ with $F(P) - \d F(P)\cdot X$ where $X$ is either the exact error $\bm{\Pi}_P(P-P_*)$, the preconditioned residual $\bm{M}^{-1}R(P)$ or the modified residual $R_\text{Schur}(P)$. To be compared with \autoref{fig:forces_estimator} (Right).}
  \label{fig:forces_differences_all}
\end{figure}

\begin{remark}\label{rmk:sampl}
  The plateau observed \autoref{fig:forces_estimator_all} and
  \autoref{fig:forces_differences_all} for \ch{GaAs}
  comes from the numerical quadrature scheme used to compute the
  exchange-correlation energy and the corresponding matrix elements.
  In fact, we also observed such plateaus for silicon and \ch{TiO2}
  with the default quadrature scheme of DFTK, but these disappeared by
  using 8 times as many numerical quadrature points. With this more
  accurate numerical quadrature scheme, the plateau for \ch{GaAs} is
  lower but still visible. It disappears when further increasing the
  number of quadrature points, at the price of longer computations.
\end{remark}

\section{Conclusion}

In this work, we have investigated methods to estimate the error on
interatomic forces resulting from plane-wave discretizations of the
Kohn--Sham equations. On the systems we investigated, we find the following:
\begin{itemize}
\item Linearizing the equations around a solution is a good
  approximation, even for energy cut-offs as small as 5 hartree (\autoref{fig:linearization}).
\item The naive approach based on the computation of operator norms
  proves to be extremely inefficient, overestimating the error by
  several orders of magnitude. This is the case even when using
  appropriate $H^{1}$-type norms (\autoref{fig:forces_cs}). The reason
  is that the discretization error is mostly made up of high frequency
  components, whereas quantities of interest are mostly supported on
  low frequencies, resulting in very suboptimal Cauchy-Schwarz
  inequalities (\autoref{fig:carot_1}).
\item Replacing directly the error by the preconditioned residual
  yields reasonable estimates of the errors, but they are not
  systematic upper bounds (\autoref{fig:forces}).
\item A Schur approach based on a low/high frequency splitting
  systematically improves the solution and gives reliable estimates of
  the error (\autoref{fig:forces_estimator}), at the price of more computational work.
\end{itemize}

Our results validate on realistic test cases and for properties such
as interatomic forces the frequency splitting approach already
introduced in
\cite{cancesPostprocessingPlanewaveApproximation2020,dussonPostprocessingPlanewaveApproximation2020,dussonAnalysisFeshbachSchurMethod2020}.Thanks
to the modular nature of DFTK and the use of automatic
differentiation, the implementation of our estimates is relatively simple and
convenient. It is publicly available at
\url{https://github.com/gkemlin/paper-forces-estimator}. The algorithm
proceeds in two steps: i) the computation of the residual on the fine
grid and ii) a linear system solve involving the Jacobian on the coarse grid. The
computational cost of step i) is negligible compared to that of step
ii), which is roughly that of a full self-consistent computation on
the coarse grid. Therefore, for roughly twice the cost of a standard
computation, one obtains an accurate approximation of the
discretization error on the interatomic forces (or, equivalently, a better estimate of the
latter).

The scope of this work is limited to gapped systems at zero temperature and to the study
of the discretization error. Interesting perspectives for future work
include the application of this methodology to the error resulting
from an incomplete self-consistent cycle, and to finite-temperature
models, including metals (see \cite{herbst2022robust} for an extension
of the linearized equations to the finite-temperature case).
\section*{Acknowledgements}
The authors would like to thank Michael F. Herbst for fruitful
discussions and help with DFTK, and Niklas Schmitz and Markus Towara for the
implementation of forward differentiation into DFTK. This project has
received funding from the European Research Council (ERC) under the
European Union's Horizon 2020 research and innovation programme (grant
agreement No 810367).
G.D. acknowledges the support of the Ecole des Ponts-ParisTech, as this
work was carried out in the framework of an associated researcher
position at CERMICS.

\appendix

\section{Mathematical justification}

\label{sec:proof}

The purpose of this appendix is to explain mathematically in a
simplified setting the observation in \autoref{sec:simple_error_bound} that
$\normF{\bm M^{-1/2} \bm{\Pi}_{P} R(P)}$ was a good approximation of
$\normF{\bm{M}^{1/2}\bm{\Pi}_{P}(P-P_*)}$. For this purpose, we work
in a slightly different framework than the one we used in the rest of
the paper, and consider the infinite-dimensional version of
Problem~\eqref{eq:pb} associated with the periodic Gross--Pitaevskii
model in dimension $d \lq 3$, which reads as
\begin{equation}\label{eq:GP_DM}
  E_*\coloneqq\min\{E(P), \; P \in \Mc_\infty\},
\end{equation}
with  $\Mc_\infty\coloneqq\{P \in \Sc(\L^2_\#) \, | \, P^2=P, \; \Tr(P)=1, \; \Ran(P) \subset \H^1_\# \}$ and
  $E(P) \coloneqq \Tr((-\Delta +V) P) + \frac 12 \int_\Gamma \rho_P^2$.
Here $\Sc(\L^2_\#)$ denotes the space of self-adjoint operators on $\L^2_\#$, $V$ a given function of $\L^\infty_\#$, and $\rho_P$ the density of $P$. The condition $\Ran(P) \subset \H^1_\#$ ensures that both the linear and nonlinear terms in the energy functional $E(P)$ are well-defined and finite.
It is convenient to rewrite \eqref{eq:GP_DM} in the orbital framework. Any state $P \in \Mc_\infty$ is rank-1 and such that $\Ran(P) \subset \H^1_\#$. It can therefore be represented by a function $\phi \in \H^1_\#$ such that $\norm{\phi}_{\L^2_\#}=1$ through the relation $P=\ket{\phi}\bra{\phi}$ (using Dirac's notation). The orbital formulation of problem~\eqref{eq:GP_DM} reads
\begin{equation}\label{eq:GP_MO}
  E_*\coloneqq\min\{{\mathcal E}^{\rm GP}(\phi), \; \phi \in \H^1_\#, \; \norm{\phi}_{\L^2_\#}=1\},
  \end{equation}
with ${\mathcal E}^{\rm GP}(\phi)\coloneqq \int_\Gamma |\nabla\phi|^2 + \int_\Gamma V|\phi|^2 + \frac 12 \int_\Gamma |\phi|^4$.
It is well-known (see e.g. the Appendix of~\cite{cancesNumericalAnalysisNonlinear2010}) that the minimizer of~\eqref{eq:GP_DM} is unique, and that the set of solutions of~\eqref{eq:GP_MO} is $(e^{i\alpha}\phi_*)_{\alpha \in \Rb}$, where $(\lambda_*,\phi_*) \in \Rb \times \H^1_\#$ is the unique solution to
\begin{equation}\label{eq:GP_study}
  \begin{cases}
    -\Delta \phi_* + V\phi_* + \phi_*^3 = \lambda_* \phi_*,\\
    \norm{\phi_*}_{\L^2_\#} = 1, \quad \phi_* > 0 \mbox{ on } \Rb^d.
  \end{cases}
\end{equation}
We consider the variational approximation of~\eqref{eq:GP_DM} in the finite dimensional space
\[
  \Xc_N\coloneqq\Span(e_{G}, \; |{G}|^2/2 \lq N)
\]
corresponding to a plane-wave discretization with energy cut-off $\Ecut=N$. We denote by $\Pi_N$ the $\L^2_\#$-orthogonal projector on $\Xc_N$ and by $\Pi_N^\perp\coloneqq1-\Pi_N$. For $N$ large enough, the approximate ground-state $P_N$ is unique and can be represented by a unique function $\phi_N$ real-valued and positive on $\Rb^3$ (see~\cite{cancesNumericalAnalysisNonlinear2010}), and it holds
\[
  \begin{cases}
    -\Delta \phi_N+\Pi_N\prt{ V\phi_N - \phi_N^3 } = \lambda_N \phi_N,\\
    \norm{\phi_N}_{\L^2_\#} = 1,
  \end{cases}
\]
for some uniquely defined $\lambda_N \in \Rb$. In addition, we have $\phi_* \in \H^2_\#$ and
\begin{equation}\label{eq:CV_GP}
  \norm{\phi_N -\phi_*}_{\H^2_\#} \mathop{\longrightarrow}_{N \to \infty} 0 \quad \mbox{and} \quad | \lambda_N -\lambda_* | \mathop{\longrightarrow}_{N \to \infty} 0.
\end{equation}

Using similar notation as the one used in the rest of the paper, we introduce the following quantities:
\begin{itemize}
  \item $\Pi_{\phi_N}^\perp$ is the orthogonal projector (for the $\L^2_\#$ inner product) onto $\phi_N^\perp$;
  \item $A_N$ is the self-adjoint operator on $\phi_N^\perp$ defined by
    \begin{align} \label{eq:def_AN}
      A_N \coloneqq (\Op_{N} + K_{N})
    \end{align}
    where $\Op_N$ and $K_N$ represent, in the orbital framework, the super-operators $\bm{\Op}(P_N)|_{T_{P_N}\Mc_\infty}$ and $\bm{K}(P_N)|_{T_{P_N}\Mc_\infty}$. We have
    \begin{align}
      \label{eq:OmegaEcut}
      \Forall \psi_N \in \phi_N^\perp, \quad \Op_N \psi_N &= \Pi_{\phi_N}^\perp\prt{-\Delta + V + \phi_N^2 - \lambda_N}\psi_N,\\
      \label{eq:KEcut}
      \Forall \psi_N \in \phi_N^\perp, \quad K_N \psi_N &= \Pi_{\phi_N}^\perp\prt{2\phi_N^2\psi_N};
    \end{align}
  \item $M_N^{1/2}$ is the restriction of the operator $\Pi_{\phi_N}^\perp (1-\Delta)^{1/2} \Pi_{\phi_N}^\perp$ to the invariant subspace $\phi_N^\perp$.
\end{itemize}

We then have the following result, which justifies in this case the claim made in \autoref{sec:simple_error_bound} that $\bm{M}^{-1/2}\bm{M}^{1/2}((\bm{\Op}(P) + \bm{K}(P))|_{\Tc_P\Mc_\Nc})^{-1}\bm{M}^{1/2}$ is close to identity on the subspace of high-frequency Fourier modes. It also justifies that $\bm{\Pi}_P(P-P_*) \approx \bm{\bm{M}}^{-1} R(P)$ as $\bm{M}^{-1/2}$ is a uniformly bounded operator and $\bm{M}^{-1/2} R(P)$ is high-frequency:
\begin{align*}
  \bm{\Pi}_P(P-P_*) &\approx \bm{M}^{-1/2}\bm{M}^{1/2}((\bm{\Op}(P) + \bm{K}(P))|_{\Tc_P\Mc_\Nc})^{-1}\bm{M}^{1/2} \bm{M}^{-1/2}R(P) \\
  & \approx \bm{M}^{-1/2} \bm{M}^{-1/2} R(P) = \bm{M}^{-1}R(P).
\end{align*}

\begin{proposition}
  \label{prop:err_res_cvg}
  We have
  \[
    \lim_{N\to\infty}
    \norm{M_{N}^{1/2}(\Op_{N} + K_{N})^{-1}M_{N}^{1/2}
      - I_{\Xc_{N}^\perp}}_{\Xc_{N}^\perp\to \L^2_\#} = 0.
  \]
\end{proposition}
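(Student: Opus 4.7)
The plan is to use the second resolvent identity together with the fact that $M_N^{-1/2}$, viewed as an operator from $\Xc_N^\perp$ to $\L^2_\#$, is small of order $N^{-1/2}$ because $\Xc_N^\perp$ consists of high-frequency Fourier modes. Setting $A_N\coloneqq\Op_N+K_N$ (both $A_N$ and $M_N$ are self-adjoint and invertible on $\phi_N^\perp$ for $N$ large enough, by nondegeneracy of $A_*$ and $\phi_N\to\phi_*$), the identity $A_N^{-1}=M_N^{-1}-A_N^{-1}(A_N-M_N)M_N^{-1}$ together with $M_N^{1/2}M_N^{-1}M_N^{1/2}=I_{\phi_N^\perp}$ yields
\begin{equation}\label{eq:plan_key}
M_N^{1/2}A_N^{-1}M_N^{1/2}-I_{\phi_N^\perp}=-M_N^{1/2}A_N^{-1}(A_N-M_N)M_N^{-1/2}.
\end{equation}
Since $\phi_N\in\Xc_N$ implies $\Xc_N^\perp\subset\phi_N^\perp$, the proposition reduces to bounding the right-hand side of \eqref{eq:plan_key} as an operator from $\Xc_N^\perp$ to $\L^2_\#$.

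The crucial quantitative input is that $\norm{M_N^{-1/2}f}_{\L^2_\#}\lq(1+2N)^{-1/2}\norm{f}_{\L^2_\#}$ for all $f\in\Xc_N^\perp$. The plan here is to observe that $\Xc_N^\perp$ is stable under $(1-\Delta)^{-1/2}$ and contained in $\phi_N^\perp$, so that one can compute directly $M_N^{1/2}[(1-\Delta)^{-1/2}f]=\Pi_{\phi_N}^\perp(1-\Delta)^{1/2}\Pi_{\phi_N}^\perp(1-\Delta)^{-1/2}f=f$; by injectivity of $M_N^{1/2}$ on $\phi_N^\perp$ one deduces $M_N^{-1/2}f=(1-\Delta)^{-1/2}f$, and the bound follows from $(1+\abs{G}^2)^{-1}\lq(1+2N)^{-1}$ on the Fourier support of any $f\in\Xc_N^\perp$.

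The three remaining factors in \eqref{eq:plan_key} will be bounded uniformly in $N$. First, $\norm{M_N^{1/2}g}_{\L^2_\#}=\norm{\Pi_{\phi_N}^\perp(1-\Delta)^{1/2}g}_{\L^2_\#}\lq\norm{g}_{\H^1_\#}$ since $\Pi_{\phi_N}^\perp$ is a contraction, so $M_N^{1/2}$ maps $\H^1_\#\cap\phi_N^\perp$ to $\L^2_\#$ with norm at most $1$. Second, expanding $M_N=M_N^{1/2}\cdot M_N^{1/2}$ from the definition and comparing with \eqref{eq:OmegaEcut}--\eqref{eq:KEcut} yields
\[
A_N-M_N=\Pi_{\phi_N}^\perp\prt{V+3\phi_N^2-\lambda_N-1}\Pi_{\phi_N}^\perp+\Pi_{\phi_N}^\perp\ket{(1-\Delta)^{1/2}\phi_N}\bra{(1-\Delta)^{1/2}\phi_N}\Pi_{\phi_N}^\perp,
\]
both terms being uniformly bounded on $\L^2_\#$ thanks to $V\in\L^\infty_\#$, the Sobolev embedding $\H^2_\#\hookrightarrow\L^\infty_\#$ in dimension $d\lq 3$, and the convergences \eqref{eq:CV_GP}. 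Third, G\^arding's inequality gives $\cro{g,A_Ng}_{\L^2_\#}\gq\norm{g}_{\H^1_\#}^2-C\norm{g}_{\L^2_\#}^2$ with $C$ uniform in $N$, and combining this with the $\L^2$-coercivity $A_N\gq\eta/2$ on $\phi_N^\perp$ (inherited from the nondegeneracy of $A_*$ on $\phi_*^\perp$ via $\phi_N\to\phi_*$) yields $\H^1$-coercivity $\cro{g,A_Ng}_{\L^2_\#}\gq c\norm{g}_{\H^1_\#}^2$ for some $c>0$ independent of $N$, hence $\norm{A_N^{-1}}_{\L^2_\#\to\H^1_\#}\lq C$ uniformly. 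Assembling the four bounds in \eqref{eq:plan_key} then gives the rate $\norm{M_N^{1/2}A_N^{-1}M_N^{1/2}-I_{\Xc_N^\perp}}_{\Xc_N^\perp\to\L^2_\#}=O(N^{-1/2})$, which concludes the proof. The main obstacle will be the third estimate: transferring quantitative coercivity from $A_*$ on $\phi_*^\perp$ to the $N$-dependent operator $A_N$ on the $N$-dependent constrained subspace $\phi_N^\perp$ requires some care, but follows from standard perturbation arguments based on the strong convergence $\phi_N\to\phi_*$ in $\H^2_\#$ stated in \eqref{eq:CV_GP}.
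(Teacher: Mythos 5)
Your proof is correct but proceeds by a genuinely different route from the paper's. Both arguments ultimately extract the $O(N^{-1/2})$ decay from the smallness of $(1-\Delta)^{-1/2}$ on $\Xc_N^\perp$, but they package the remaining algebra very differently. The paper computes $M_N^{1/2}A_N^{-1}M_N^{1/2}\deltaphi_N-\deltaphi_N$ explicitly: it passes from the constrained operator $A_N$ on $\phi_N^\perp$ to the unconstrained $\widetilde A_N=(1-\Delta)+W_N$ via a Lagrange multiplier $\alpha_N$ enforcing $v_N\perp\phi_N$, rewrites everything through $B_N=(1-\Delta)^{1/2}\widetilde A_N^{-1}(1-\Delta)^{1/2}$, and harvests the decay from the identity $B_N-1=-B_N(1-\Delta)^{-1/2}W_N(1-\Delta)^{-1/2}$ composed with $\Pi_N^\perp$; the only uniform ingredient it needs is boundedness of $B_N$ on $\L^2_\#$. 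You instead apply the second resolvent identity to $A_N$ and $M_N$ directly, which eliminates the multiplier bookkeeping and reduces matters to four clean operator-norm bounds; the decay now sits in $M_N^{-1/2}|_{\Xc_N^\perp}=(1-\Delta)^{-1/2}|_{\Xc_N^\perp}$, and the nontrivial uniform ingredient is the $\H^1$-coercivity of $A_N$ on $\phi_N^\perp$. That coercivity is indeed the only subtle point, as you note; it can either be obtained by your G\aa rding-plus-$\L^2$-gap route, or, more directly, inherited from the coercivity of $\widetilde A_*$ quoted from~\cite[Lemma~1]{cancesNumericalAnalysisNonlinear2010} via $\norm{W_N-W_*}_{\L^\infty_\#}\to 0$, since for $g\in\phi_N^\perp$ one simply has $\cro{g,A_N g}_{\L^2_\#}=\cro{g,\widetilde A_N g}_{\L^2_\#}$. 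Your computation of $A_N-M_N$ as a bounded potential plus a rank-one term is exactly right and is the reason the resolvent identity is usable here even though $A_N$ and $M_N$ are both unbounded. In summary: same decay mechanism and same rate, but your argument is more modular and avoids the explicit multiplier formula, at the price of needing uniform coercivity of the projected operator $A_N$, which the paper's route sidesteps by working with $B_N$ instead.
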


\begin{proof} Let $W_N \coloneqq V + 3\phi_N^2 - \lambda_N - 1$ and $W_*\coloneqq V+3\phi_*^2-\lambda_*-1$. In view of \eqref{eq:CV_GP}, $W_N$ converges to $W_*$ in $\L^\infty_\#$ when $N$ goes to infinity.
  It also follows from~\cite[Lemma~1]{cancesNumericalAnalysisNonlinear2010} that the self-adjoint operator
  \begin{align*}
    \widetilde A_* &\coloneqq-\Delta + V + 3\phi_*^2 - \lambda_*=(1-\Delta)+W_* \\
    &=(1-\Delta)^{1/2} \Big(1 + (1-\Delta)^{-1/2} W_{*} (1-\Delta)^{-1/2}\Big)(1-\Delta)^{1/2}
  \end{align*}
  is coercive, hence, by the Lax--Milgram lemma, defines a continuous isomorphism from $\H^1_\#$ to $\H^{-1}_\#$. We denote by $\widetilde A_*^{-1}$ its inverse, seen as a bounded operator from $\H^{-1}_\#$ to $\H^1_\#$, so that $B_*\coloneqq(1-\Delta)^{1/2}\widetilde A_*^{-1}(1-\Delta)^{1/2}$ defines a bounded operator on $\L^2_\#$.

  Using the convergence results \eqref{eq:CV_GP} and standard
  perturbation theory it follows that for $N$ large enough, the operator $B_N\coloneqq(1-\Delta)^{1/2}\widetilde A_N^{-1}(1-\Delta)^{1/2}$, where $\widetilde A_N\coloneqq(1-\Delta)+W_N$, is bounded on $\L^2_\#$ uniformly in $N$, and that we have
  \begin{equation}\label{eq:BN}
    B_N= \prt{ 1 +(1-\Delta)^{-1/2}  W_N (1-\Delta)^{-1/2} }^{-1} = 1 -
    B_N (1-\Delta)^{-1/2} W_N (1-\Delta)^{-1/2} .
  \end{equation}

  We now compute the action of the operator $M_{N}^{1/2}A_N^{-1}M_{N}^{1/2} : \phi_{N}^{\perp}
  \to \phi_{N}^{\perp}$, relating it to $\widetilde A_{N}^{-1}$ and $B_{N}$, with $A_N$ defined in \eqref{eq:def_AN}.
  Let $\deltaphi_N \in X_N^\perp$. As $\phi_N \in X_N$, we have $X_N^\perp \subset \phi_N^\perp$ so that $\deltaphi_N \in X_N^\perp \subset \phi_N^\perp$, and $M_{N}^{1/2}\deltaphi_N=(1-\Delta)^{1/2}\deltaphi_N \in X_N^\perp \subset \phi_N^\perp$, where we used that $X_N$ and $X_N^\perp$ are invariant subspaces of the operator $(1-\Delta)^{1/2}$.
  Let $v_N\coloneqq A_N^{-1}M_{N}^{1/2}\deltaphi_N=A_N^{-1}(1-\Delta)^{1/2}\deltaphi_N \in \phi_N^\perp$. Using~\eqref{eq:OmegaEcut} and~\eqref{eq:KEcut}, we get
  \[
    \Pi_{\phi_N}^\perp \prt{-\Delta + V + 3\phi_N^2 - \lambda_N}v_N=(1-\Delta)^{1/2}\deltaphi_N, \quad \mbox{\ie} \quad
    \Pi_{\phi_N}^\perp \widetilde A_Nv_N = (1-\Delta)^{1/2}\deltaphi_N,
  \]
  and therefore,
  \[
    \widetilde A_Nv_N=(1-\Delta)^{1/2}\deltaphi_N+\alpha_N\phi_N,
  \]
  where $\alpha_N = - \frac{\cro{\phi_N , \widetilde A_N^{-1} (1-\Delta)^{1/2}\deltaphi_N}_{\L^2_\#}}{\cro{\phi_N , \widetilde A_N^{-1} \phi_N}_{\L^2_\#}}= - \frac{\cro{(1-\Delta)^{-1/2}\phi_N , B_N\deltaphi_N}_{\L^2_\#}}{\cro{\phi_N , \widetilde A_N^{-1} \phi_N}_{\L^2_\#}} \in {\mathbb R}$ is characterized by the constraint $v_N \in \phi_N^\perp$.
  We thus obtain
  \[
    v_N = \widetilde A_N^{-1} (1-\Delta)^{1/2}\deltaphi_N - \frac{\cro{(1-\Delta)^{-1/2}\phi_N , B_N\deltaphi_N}_{\L^2_\#}}{\cro{\phi_N , \widetilde A_N^{-1} \phi_N}_{\L^2_\#}}\widetilde A_N^{-1}\phi_N,
  \]
  and therefore, as $v_N \in \phi_N^\perp$,
  {\footnotesize\begin{align*}
    & M_{N}^{1/2}A_N^{-1}M_{N}^{1/2} \deltaphi_{N} - \deltaphi_N \\
    &\quad =M_N^{1/2}v_N-\deltaphi_N \\
    &\quad = \Pi_{\phi_N}^\perp (1-\Delta)^{1/2}v_N-\deltaphi_N \\
    &\quad =(1-\Delta)^{1/2}v_N-\cro{\phi_N,(1-\Delta)^{1/2}v_N}_{\L^2_\#} \phi_N - \deltaphi_N \\
    &\quad =(B_N-1)\deltaphi_N- \cro{\phi_N,B_N\deltaphi_N}_{\L^2_\#}  \phi_N \\&\quad\phantom{=}- \frac{\cro{(1-\Delta)^{-1/2}\phi_N , B_N\deltaphi_N}_{\L^2_\#}}{\cro{\phi_N , \widetilde A_N^{-1} \phi_N}_{\L^2_\#}}
    \prt{ (1-\Delta)^{1/2}\widetilde A_N^{-1}\phi_N - \cro{\phi_N, (1-\Delta)^{1/2} \widetilde A_N^{-1}\phi_N}_{\L^2_\#} \phi_N } \\
    &\quad =(B_N-1)\deltaphi_N- \cro{\phi_N,B_N\deltaphi_N}_{\L^2_\#}  \phi_N \\&\quad\phantom{=}- \frac{\cro{(1-\Delta)^{-1/2}\phi_N , B_N\deltaphi_N}_{\L^2_\#}}{\cro{\phi_N , \widetilde A_N^{-1} \phi_N}_{\L^2_\#}}
    \prt{ B_N(1-\Delta)^{-1/2}\phi_N - \cro{\phi_N, B_N(1-\Delta)^{-1/2} \phi_N}_{\L^2_\#} \phi_N } \\
    &\quad  = (B_N-1)\deltaphi_N- \cro{\phi_N,(B_N-1)\deltaphi_N}_{\L^2_\#}  \phi_N \\& \quad \phantom{=}- \frac{\cro{(1-\Delta)^{-1/2}\phi_N , (B_N-1)\deltaphi_N}_{\L^2_\#}}{\cro{\phi_N , \widetilde A_N^{-1} \phi_N}_{\L^2_\#}}
    \!\! \prt{ B_N(1-\Delta)^{-1/2}\phi_N - \cro{\phi_N, B_N(1-\Delta)^{-1/2} \! \phi_N}_{\L^2_\#} \!\! \phi_N },
  \end{align*}}
  where we used the fact that $\deltaphi_N \in \Xc_N^\perp$, while $\phi_N$ and $(1-\Delta)^{-1/2}\phi_N$ belong to $\Xc_N$. Using again \eqref{eq:CV_GP} we obtain that for $N$ large enough,
  \begin{align*}
    \norm{M_{N}^{1/2}A_{N}^{-1}M_{N}^{1/2}
      - I_{\Xc_{N}^\perp}}_{\Xc_{N}^\perp\to \L^2_\#} &= \sup_{\deltaphi_N \in \Xc_N^\perp}  \frac{\norm{M_{N}^{1/2}A_N^{-1}M_{N}^{1/2} \deltaphi_{N}-\deltaphi_N}_{\L^2_\#}}{\norm{\deltaphi_N}_{L^2_\#}} \\
      & \lq C_* \norm{(B_N-1)\Pi_N^\perp}_{\L^2_\# \to \L^2_\#},
  \end{align*}
  where
  \[
    C_*\coloneqq3 + \frac{\norm{\phi_*}_{\H^{-1}_\#}}{\cro{\phi_*,\widetilde A_*^{-1}\phi_*}_{\L^2_\#}}
    \times 2\norm{B_*}_{\L^2_\#\to\L^2_\#}\norm{\phi_*}_{\H_\#^{-1}}.
  \]
  Finally, using \eqref{eq:BN}, we have
  \begin{align*}
    \norm{(B_N-1)\Pi_N^\perp}_{\L^2_\# \to \L^2_\#} &\lq \norm{B_N
    (1-\Delta)^{-1/2}
    W_N}_{\L^2_\# \to \L^2_\#} \norm{(1-\Delta)^{-1/2} \Pi_N^\perp}_{\L^2_\# \to \L^2_\#} \\
    & \lq  \norm{B_N}_{\L^2_\# \to \L^2_\#} \norm{W_N}_{\L^\infty_\#} (1+2N)^{-1/2} \mathop{\longrightarrow}_{N \to 0} 0,
  \end{align*}
  since  $\norm{B_N}_{\L^2_\# \to \L^2_\#}$ and
  $\norm{W_N}_{\L^\infty_\#}$ are uniformly bounded in $N$. This concludes
  the proof.
\end{proof}

\bibliography{./biblio}
\bibliographystyle{abbrv}
\end{document}